\documentclass [12 pt]{amsart}
\usepackage{latexsym}
\usepackage{amssymb}
\usepackage{amsmath}
\usepackage{amscd}
\usepackage{graphicx}
\usepackage{xypic}
\theoremstyle{plain}
\newtheorem{theorem}{Theorem}[section]

\newtheorem{corollary}[theorem]{Corollary}
\theoremstyle{definition}
\newtheorem{definition}[theorem]{Definition}
\newtheorem{proposition}[theorem]{Proposition}
\newtheorem{example}[theorem]{Example}
\theoremstyle{remark}
\newtheorem{remark}[theorem]{Remark}
\setcounter{MaxMatrixCols}{10}
\setcounter{secnumdepth}{3} \setcounter{tocdepth}{3}
\setlength{\topmargin}{-.1cm}%

\setlength{\headheight}{.4cm}
\setlength{\headsep}{1cm}
\setlength{\textheight}{21 cm}
\setlength{\textwidth}{15.2 cm}
\setlength{\evensidemargin}{1.1cm}
\setlength{\oddsidemargin}{1.1cm}
\theoremstyle{definition}

\theoremstyle{remark}

\errorcontextlines=0
%



\begin{document}

\title[FUZZY SUBGROUPS COMMUTATIVITY DEGREE OF DIHEDRAL GROUPS]{FUZZY SUBGROUPS COMMUTATIVITY DEGREE OF DIHEDRAL GROUPS}

\author[]{Hassan Naraghi$^{1,*}$ and Hosein Naraghi$^2$}

\address[]{Department of Mathematics, Ashtian Branch, Islamic Azad University, Iran$^1$.
\newline}
\address[]{Young Researchers and Elite Club, Ashtian Branch, Islamic Azad University, Iran$^*$.
\newline}
\address[]{Department of Mathematics, Payame Noor Uiversity, Iran$^2$.
\newline}
\email[]{naraghi@aiau.ac.ir$^1$\\
ho.naraghi@pnu.ac.ir$^2$
}

\keywords{Permutable fuzzy subgroup, Mutually
permutable fuzzy subgroup, Fuzzy quasinormal subgroup, Commutativity degree, Group, Fuzzy, Probability.}

\subjclass[2000]{20N25, 60B15.}

\thanks{ The authors are deeply grateful to the Islamic Azad University, Ashtian Branch for the financial support for the proposal entitled "FUZZY SUBGROUPS COMMUTATIVITY DEGREE OF DIHEDRAL GROUPS".}
\maketitle
\begin{abstract}
In this paper we introduce and study the concept of distinct fuzzy subgroups commutativity degree of a finite group $G$. This quantity measures the
probability of two  random distinct fuzzy subgroups of $G$ commuting. We determine distinct fuzzy subgroup commutativity degree for some of finite groups.

\end{abstract}
\section{Introduction}\label{s1}
In 1965, Zadeh \cite{Z} first introduced fuzzy set. Mordeson et.al (\cite{MKA1}) called him "a pioneer of work on fuzzy subsets". After that
paper, several aspects of fuzzy subsets were studied. In 1971, Rosenfled \cite{R} introduced fuzzy sets in the realm of group theory and formulated
the concepts of fuzzy subgroups of a group. An increasing number of properties from classical group theory have been generalized. In the last years
there has been a growing interest in the use of probability in finite group theory. One of the most important aspects that have been studied
is the probability that two elements of a finite groups $G$ commute. This is called the commutativity degree of $G$. Let G be a group and let $\mu$ and $\nu$ be fuzzy subgroups
of G. We say that $\mu$ is permuted by $\nu$ if for any $a,b\in
G$, there exists $x\in G$ such that
$\mu(x^{-1}ab)\geq\mu(a),\nu(x)\geq\nu(b)$ and we say $\mu$ and
$\nu$ are permutable if $\mu$ is permuted by $\nu$ and $\nu$ is
permuted by $\mu$. Also we say that $\mu$ is permuted by $\nu$
mutually if for any subgroup L of $\nu_{b}$ that $b\in Im\nu$, we
have been for any $a\in G,l\in L$, there exist $l_{1},l_{2}$ of L
such that $\mu(l^{-1}_{1}al)\geq\mu(a)$ and
$\mu(lal^{-1}_{2})\geq\mu(a)$ and we say $\mu$ and $\nu$ are
mutually permutable if $\mu$ is permuted by $\nu$ mutually and
$\nu$ is permuted by $\mu$ mutually. Let $\mu$ and $\nu$ be fuzzy
subgroups of G. In \cite{Naraghi} have been determined that $\mu$ and $\nu$ are
permutable(mutually permutable) if and if for any $t\in Im\mu,s\in
Im\nu$, $\mu_{t},\nu_{s}$ are permutable(mutually permutable) which denote by $\nu\in P(\mu)$($\nu\in MP(\mu)$). Ajmal
and Thomas \cite{Ajmal} introduced the notion of a fuzzy quasinormal
subgroup. Fuzzy quasinormal subgroup arising out of fuzzy normal
subgroup. Also in \cite{Naraghi} have been proved that $\mu$ is a fuzzy quasinormal subgroup
of group G if and only if for every subgroup L of G, we have been
that for any $a\in G,l\in L$ there exist $l_{1},l_{2}$ of L such
that $\mu(l^{-1}_{1}al)\geq\mu(a)$ and
$\mu(lal^{-1}_{2})\geq\mu(a)$. In the following, let $G$ be a finite group and denote by $F(G)$ the set of all
fuzzy subgroup of a group $G$. Let $F_{1}(G)$ be the set of all fuzzy subgroups $\mu$ of G such
that $\mu(e)=1$. In this paper, we use the natural equivalence of fuzzy subgroups
studied by Iranmanesh and Naraghi \cite{NI}. This is denoted by $\sim$ and the set of all the equivalence classes $\sim$ on $F_{1}(G)$ is
denoted by $S(G)$. We consider the quantity
$$
sd(G)=\frac{1}{|S(G)|^2}|\{(\mu,\nu)\in S(G)^2| \nu\in P(\mu)\}|
$$
which will be called the distinct fuzzy subgroup commutativity degree of $G$. Clearly, $sd(G)$ measures the probability that two
distinct fuzzy subgroups of $G$ commute. For an arbitrary finite group $G$, computing $sd(G)$ is a difficult work,
since it involves the counting of distinct fuzzy subgroups of $G$. In this paper a first step in the study of permutable fuzzy
subgroups of a finite group $G$ which in section \ref{s3} and \ref{s4} we present some basic properties and result on the permutable fuzzy
subgroups of a finite group $G$.  In the section \ref{s5} we study some basic properties and result on the natural equivalence of fuzzy subgroups
studied by Iranmanesh and Naraghi \cite{NI}. In the section \ref{s6} we determine the number of distinct fuzzy subgroups for some of dihedral groups.
In the final section deals with distinct fuzzy subgroup commutativity degree for some of finite groups.

\section{Preliminaries}\label{s2}
We use [0,1], the real unit interval as a chain the usual ordering
in R which $\wedge$ stands for infimum( or intersection) and
$\vee$ stands for supremum ( or union) for the degree of
membership. A fuzzy subset of a set X is mapping
$\mu:\rightarrow[0,1]$. The union and intersection of two fuzzy
subset are defined using sup and inf point wise. We denote the set
of all fuzzy subset of X by $I^{X}$. Further, we denote fuzzy
subsets by the Greek letters $\mu,\nu,\eta$, etc. Let $\mu,\nu\in
I^{X}$. If $\mu(x)\leq\nu(x) \forall x\in X$, then we say that
$\mu$ is contained in $\nu$ ( or $\nu$ contains $\mu$) and we
write $\mu\subseteq\nu$. Let $\mu\in I^{X}$ for $a\in I$, define
$\mu_{a}$ as follow:\\
$\mu_{a}=\{x\mid x\in X, \mu(x)\geq a\}$. $\mu_{a}$ is called
a-cut(
or a-level) set of $\mu$.\\
It is easy to verify that for any $\mu,\nu\in I^{X}$:\\
1) $\mu\subseteq\nu,a\in I\Rightarrow\mu_{a}\subseteq\nu_{a}$.\\
2) $a\leq b,a,b\in I\Rightarrow\mu_{b}\subseteq\mu_{a}$.\\
3) $\mu=\nu\Leftrightarrow\mu_{a}=\nu_{a}\forall a\in I$.\\
Let G be an arbitrary group with a multiplicative binary operation
and identity. We define the binary operation $o$ on
$I^{G}$ as follow:\\
$\forall\mu,\nu\in I^{G}$, $\forall x\in G$\\
$(\mu o \nu)(x)=\vee\{\mu(y)\wedge\nu(z)\mid y,z\in G, yz=x\}$.\\
We call $\mu o \nu$ the product of $\mu$ and $\nu$. Fuzzy subset
$\mu$ of G is called a fuzzy subgroup of G if\\
($G_{1}$) $\mu(xy)\geq\mu(x)\wedge\mu(y)\forall x,y\in G$;\\
($G_{2}$) $\mu(x^{-1}\geq\mu(x)\forall x\in G$.

\begin{proposition}\label{main}\cite[Lemma 1.2.5]{MM}.
Let $\mu\in I^{G}$. Then $\mu$ is a fuzzy
subgroup of G if and only if $\mu_{a}$ is a subgroup of G,
$\forall a\in\mu(G)\bigcup\{b\in I\mid b\leq\mu(e)\}$.
\end{proposition}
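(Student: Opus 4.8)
The plan is to prove the two implications separately, in each case reducing the fuzzy-subgroup axioms $(G_1)$ and $(G_2)$ to the ordinary subgroup axioms for the level sets $\mu_a = \{x \in G : \mu(x) \geq a\}$.

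First I would treat the forward direction. Assuming $\mu$ is a fuzzy subgroup, I would begin by recording the basic fact that $\mu(e) \geq \mu(x)$ for every $x \in G$: indeed $\mu(e) = \mu(xx^{-1}) \geq \mu(x) \wedge \mu(x^{-1}) \geq \mu(x)$ using $(G_1)$ and $(G_2)$. Consequently every $a$ in the index set $\mu(G) \cup \{b \in I : b \leq \mu(e)\}$ satisfies $a \leq \mu(e)$, so $e \in \mu_a$ and in particular $\mu_a \neq \emptyset$. Closure and inverse-closure are then immediate: if $x, y \in \mu_a$ then $\mu(xy) \geq \mu(x) \wedge \mu(y) \geq a$ by $(G_1)$, and $\mu(x^{-1}) \geq \mu(x) \geq a$ by $(G_2)$, so $\mu_a$ is a subgroup.

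For the converse, I would assume each $\mu_a$ (for $a$ in the index set) is a subgroup and verify $(G_1)$ and $(G_2)$ pointwise. The key observation is that for any $x, y \in G$ the value $a := \mu(x) \wedge \mu(y)$ is the smaller of $\mu(x)$ and $\mu(y)$, hence lies in $\mu(G)$; thus $\mu_a$ is by hypothesis a subgroup, and since $x, y \in \mu_a$ we get $xy \in \mu_a$, i.e. $\mu(xy) \geq a = \mu(x) \wedge \mu(y)$, which is $(G_1)$. Similarly, taking $a := \mu(x) \in \mu(G)$ gives $x \in \mu_a$ and hence $x^{-1} \in \mu_a$, which is $(G_2)$.

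The argument is essentially routine; the only point demanding care is the bookkeeping with the index set. The hypothesis in the converse must be applied at the level $a = \mu(x) \wedge \mu(y)$, and the step I would check most carefully is confirming that this value genuinely belongs to the set $\mu(G) \cup \{b \in I : b \leq \mu(e)\}$ on which subgroups are assumed. Here the totality of the order on $[0,1]$ guarantees $\mu(x) \wedge \mu(y) \in \{\mu(x), \mu(y)\} \subseteq \mu(G)$, so no level outside the guaranteed range is ever invoked, and the proof closes cleanly.
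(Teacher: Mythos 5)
Your proof is correct: the forward direction properly establishes $\mu(e)\geq\mu(x)$ so that $e\in\mu_a$ for every $a$ in the index set, and the converse correctly exploits the total order on $[0,1]$ to place $\mu(x)\wedge\mu(y)$ inside $\mu(G)$, which is exactly the point requiring care. Note, however, that the paper offers no proof of this statement at all --- it is quoted as \cite[Lemma 1.2.5]{MM} among the preliminaries --- so there is nothing to compare against; your argument is the standard one.
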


\begin{theorem}\label{t21}\cite[Theorem 1.2.9]{MM}.
Let $\mu\in I^{G}$. Then $\mu o \nu$ is a
fuzzy subgroup if and only if $\mu o \nu=\nu o \mu$.
\end{theorem}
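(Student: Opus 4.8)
The plan is to mirror the classical fact that for subgroups $H,K\le G$ the product $HK$ is a subgroup precisely when $HK=KH$, transplanting the argument to the product operation $\circ$ on $I^{G}$ (the operation written $\mu o \nu$ in the statement). Throughout I treat $\mu,\nu$ as fuzzy subgroups of $G$ and set $\lambda=\mu\circ\nu$. Two standard facts about $\circ$ will carry the whole proof: first, that $\circ$ is associative on $I^{G}$, which I would either cite from \cite{MM} or verify directly by expanding both triple products as suprema over factorizations $x=uvw$; and second, that every fuzzy subgroup is idempotent, $\mu\circ\mu=\mu$. The inclusion $\mu\circ\mu\subseteq\mu$ is just axiom $(G_{1})$, while $\mu\subseteq\mu\circ\mu$ follows from the factorization $x=x\cdot e$ together with $\mu(e)\ge\mu(x)$, the latter being immediate from $(G_{1})$ and $(G_{2})$.

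The technical heart is a single inversion identity, valid for arbitrary fuzzy subgroups with no commutativity assumed:
\begin{equation*}
(\mu\circ\nu)(x^{-1})=(\nu\circ\mu)(x)\qquad\text{for all }x\in G.
\end{equation*}
To prove it I would reparametrize the defining supremum: as $(y,z)$ ranges over all factorizations $yz=x^{-1}$, the pair $(a,b)=(z^{-1},y^{-1})$ ranges bijectively over all factorizations $ab=x$, and by inverse-invariance $\mu(y)=\mu(y^{-1})=\mu(b)$ and $\nu(z)=\nu(z^{-1})=\nu(a)$, so that $\mu(y)\wedge\nu(z)=\nu(a)\wedge\mu(b)$; taking suprema gives the identity. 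I would also record the consequence that any fuzzy subgroup $\lambda$ satisfies $\lambda(x^{-1})=\lambda(x)$ (apply $(G_{2})$ to both $x$ and $x^{-1}$).

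For the direction $\mu\circ\nu=\nu\circ\mu\Rightarrow\lambda$ is a fuzzy subgroup, I would check the two axioms separately. For $(G_{1})$ I would combine associativity, commutativity and idempotence to compute
\begin{equation*}
\lambda\circ\lambda=\mu\circ(\nu\circ\mu)\circ\nu=\mu\circ(\mu\circ\nu)\circ\nu=(\mu\circ\mu)\circ(\nu\circ\nu)=\mu\circ\nu=\lambda,
\end{equation*}
so that $\lambda(xy)=(\lambda\circ\lambda)(xy)\ge\lambda(x)\wedge\lambda(y)$, the final inequality coming from the single factorization $xy=x\cdot y$. For $(G_{2})$ I would invoke the inversion identity and the hypothesis: $\lambda(x^{-1})=(\mu\circ\nu)(x^{-1})=(\nu\circ\mu)(x)=(\mu\circ\nu)(x)=\lambda(x)$. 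Conversely, assuming $\lambda=\mu\circ\nu$ is a fuzzy subgroup, I would use $\lambda(x^{-1})=\lambda(x)$ together with the inversion identity to obtain $(\nu\circ\mu)(x)=(\mu\circ\nu)(x^{-1})=\lambda(x^{-1})=\lambda(x)=(\mu\circ\nu)(x)$ for every $x$, i.e. $\nu\circ\mu=\mu\circ\nu$.

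I expect the main obstacle to be purely bookkeeping: setting up the inversion identity correctly (the index change $y,z\mapsto z^{-1},y^{-1}$ and the appeal to inverse-invariance) and ensuring that associativity of $\circ$ is genuinely available rather than silently assumed. Once those two ingredients are in place, both implications are short, and the only remaining subtlety is recognizing that $(G_{1})$ alone—via idempotence—already forces the product to be a fuzzy subgroup once the operation commutes.
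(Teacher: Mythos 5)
Your proof is correct: the inversion identity $(\mu\circ\nu)(x^{-1})=(\nu\circ\mu)(x)$, the idempotence $\mu\circ\mu=\mu$, and the associativity computation $\lambda\circ\lambda=\lambda$ are all valid, and together they yield both implications. The paper gives no proof of its own here (the theorem is quoted verbatim from the cited reference \cite{MM}), and your argument is essentially the standard one from that source, so there is nothing further to compare.
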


\begin{definition}\cite{Ajmal}.
Let $\mu$ be a fuzzy subgroup of group G, $\mu$ is said to
be fuzzy normal subgroup of G if $\mu(xy)=\mu(yx)\forall x,y\in
G$.
\end{definition}

\begin{definition}\cite{Ballester}.
Let G be a group and let H and K be subgroups of G.\\
(a) We say that H and K are permutable if $HK=KH=<H,K>$.\\
(b) We say that H and K are mutually permutable if H permutes with
every subgroup of K and K permutes with every subgroup of H.
\end{definition}

\begin{definition}\cite{Ballester}.
Let G be a group and let H be a subgroup of G, H is said to
be quasinormal in G, if H permutes with every subgroup of G.
\end{definition}


\section{Permutable and mutually permutable on fuzzy subgroups of a group}\label{s3}

\begin{definition}
Let G be a group and let $\mu$ and $\nu$ be fuzzy subgroups of
G.\\
(a) We say that $\mu$ is permuted by $\nu$ if for any $a,b\in G$,
there exists $x\in G$ such that
$\mu(x^{-1}ab)\geq\mu(a),\nu(x)\geq\nu(b)$.\\
(b) We say that $\mu$ is permuted by $\nu$ mutually if for any
subgroup L of $\nu_{b}$ that $b\in Im\nu$, we have been for any
$a\in G,l\in L$, there exist $l_{1},l_{2}$ of L such that
$\mu(l^{-1}_{1}al)\geq\mu(a)$ and $\mu(lal^{-1}_{2})\geq\mu(a)$.
\end{definition}

\begin{definition}
Let G be a group and let $\mu$ and $\nu$ be fuzzy subgroups of
G.\\
(a) We say $\mu$ and $\nu$ are permutable if $\mu$ is permuted by
$\nu$ and $\nu$ is permuted by $\mu$.\\
(b) We say $\mu$ and $\nu$ are mutually permutable if $\mu$ is
permuted by $\nu$ mutually and $\nu$ is permuted by $\mu$
mutually.
\end{definition}

\begin{corollary}\label{c31}
Let $\mu$ and $\nu$ be fuzzy subgroups of G. If $\mu$ and $\nu$
are mutually permutable then $\mu$ and $\nu$ are permutable.
\end{corollary}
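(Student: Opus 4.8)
The plan is to show that mutual permutability of $\mu$ and $\nu$ implies their permutability by unwinding the two definitions and producing, for each pair $a,b\in G$, the required witness $x\in G$. Concretely, suppose $\mu$ and $\nu$ are mutually permutable; I want to verify condition (a) of the permutability definition, namely that for all $a,b\in G$ there exists $x\in G$ with $\mu(x^{-1}ab)\geq\mu(a)$ and $\nu(x)\geq\nu(b)$ (and the symmetric statement with the roles of $\mu$ and $\nu$ reversed, which follows identically). The natural choice of witnessing subgroup is to take $b\in G$, set $s=\nu(b)$, and consider the cut $\nu_{s}$, which is a subgroup of $G$ by Proposition~\ref{main}; then $b\in\nu_{s}$ by definition of the cut.

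First I would specialize the mutual-permutability hypothesis. Since $\mu$ is permuted by $\nu$ mutually, for every subgroup $L$ of a level set $\nu_{s}$ (with $s\in Im\,\nu$) and every $a\in G$, $l\in L$, there exist $l_{1},l_{2}\in L$ with $\mu(l_{1}^{-1}al)\geq\mu(a)$ and $\mu(lal_{2}^{-1})\geq\mu(a)$. I would apply this with $L=\nu_{s}$ (a subgroup of itself) and with the element $l=b$, which lies in $L$ because $\nu(b)=s$ means $b\in\nu_{s}$. This yields an element $l_{1}\in\nu_{s}$ with $\mu(l_{1}^{-1}ab)\geq\mu(a)$. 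Setting $x=l_{1}$ gives immediately $\mu(x^{-1}ab)\geq\mu(a)$, and since $x=l_{1}\in\nu_{s}$ we have $\nu(x)\geq s=\nu(b)$, which is exactly the second required inequality $\nu(x)\geq\nu(b)$.

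This establishes that $\mu$ is permuted by $\nu$ in the ordinary (non-mutual) sense. The same argument with the roles of $\mu$ and $\nu$ interchanged—using that $\nu$ is permuted by $\mu$ mutually, applied to $L=\mu_{t}$ where $t=\mu(b)$—shows that $\nu$ is permuted by $\mu$. Together these give that $\mu$ and $\nu$ are permutable, completing the proof.

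The step I expect to require the most care is the translation between the two membership conditions: the mutual definition is phrased in terms of an element $l$ ranging over a subgroup $L$ of a level set, whereas the ordinary definition asks for a single global witness $x$. The key observation making this work is that one is free to take $L$ to be the full level set $\nu_{s}$ and to set $l=b$, so that the mutually-permutable conclusion already supplies an element of $\nu_{s}$ (hence with $\nu$-value at least $s$) doing the job; no genuinely new construction is needed, only the correct specialization of $L$ and $l$. I would double-check that $\nu(b)\in Im\,\nu$ legitimately so that the hypothesis applies, and that $\nu_{s}$ is indeed a subgroup, both of which are immediate from the definitions and Proposition~\ref{main}.
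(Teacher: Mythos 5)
Your argument is correct and is exactly the ``straightforward'' unwinding the paper leaves to the reader: taking $L=\nu_{s}$ with $s=\nu(b)\in Im\,\nu$ and $l=b$, the element $l_{1}$ supplied by mutual permutability serves as the witness $x$, since $l_{1}\in\nu_{s}$ gives $\nu(l_{1})\geq\nu(b)$, and the symmetric case is identical. The paper's own proof is just the word ``Straightforward,'' so there is no divergence of method to report.
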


\begin{proof}
Straightforward.
\end{proof}

\begin{corollary}\label{c32}
Let $\mu$ is a fuzzy normal subgroup of G. Then $\mu$ permutes
with every fuzzy subgroup of G mutually.
\end{corollary}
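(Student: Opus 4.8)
The plan is to turn the hypothesis into a statement about conjugation and then to invoke the characterisation of fuzzy quasinormality recalled from \cite{Naraghi}. First I would record the elementary consequence of fuzzy normality: since $\mu(xy)=\mu(yx)$ for all $x,y\in G$, substituting $x=g$ and $y=ag^{-1}$ yields $\mu(gag^{-1})=\mu(ag^{-1}g)=\mu(a)$ for all $a,g\in G$. Thus $\mu$ is invariant under conjugation, and this is the only place in which the normality hypothesis is used.

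The main step is then almost immediate. Let $\nu$ be an arbitrary fuzzy subgroup of $G$, fix $b\in Im\nu$, and let $L$ be any subgroup of $\nu_{b}$; by Proposition \ref{main} the cut $\nu_{b}$ is a subgroup of $G$, so $L$ is a genuine subgroup of $G$ and the elements $l,l_{1},l_{2}$ below make sense. Given $a\in G$ and $l\in L$, I would simply take $l_{1}=l_{2}=l\in L$. Conjugation invariance gives $\mu(l^{-1}al)=\mu(a)\geq\mu(a)$ and $\mu(lal^{-1})=\mu(a)\geq\mu(a)$, so both defining inequalities hold, in fact with equality. Hence $\mu$ is permuted by $\nu$ mutually, and since $\nu$ was arbitrary this is exactly the assertion.

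It is worth phrasing the same computation in the language of \cite{Naraghi}: because the two inequalities hold for \emph{every} subgroup $L$ of $G$, not merely for the cuts of a fixed $\nu$, the conjugation invariance of $\mu$ shows that $\mu$ satisfies the condition characterising a fuzzy quasinormal subgroup, so $\mu$ is fuzzy quasinormal and is therefore permuted mutually by every fuzzy subgroup. The only point I would watch is the quantifier over $L$: the witnesses $l_{1},l_{2}$ must lie inside $L$, and the heart of the argument is precisely that conjugation invariance lets one choose them independently of $L$ by setting $l_{1}=l_{2}=l$. If the conclusion were instead read in the fully symmetric sense of the definition, the complementary statement that $\nu$ is permuted by $\mu$ mutually would have to be addressed separately, and that is the step I would expect to cause difficulty; but for the direction actually asserted here, once fuzzy normality has been rewritten as $\mu(gag^{-1})=\mu(a)$ there is no real obstacle, since this identity collapses the existence statements to the trivial choice.
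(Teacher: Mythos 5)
Your proof is correct and is exactly the argument the paper has in mind --- the paper's own proof reads only ``Straightforward,'' and rewriting fuzzy normality as conjugation-invariance $\mu(l^{-1}al)=\mu(a)$ so that the witnesses can be taken to be $l_1=l_2=l$ is the evident way to fill it in. Your closing caution about the symmetric reading is also well placed: the corollary can only be the one-directional assertion that $\mu$ is permuted by every $\nu$ mutually, since by Theorem~\ref{t32} the fully symmetric version would require each level $\nu_s$ to permute with every subgroup of the normal level $\mu_t$, which already fails for crisp groups.
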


\begin{proof}
Straightforward.
\end{proof}

\begin{theorem}\label{t31}
Let $\mu$ and $\nu$ be fuzzy subgroups of G, then $\mu$ and $\nu$
are permutable if and if for any $t\in Im\mu,s\in Im\nu$,
$\mu_{t},\nu_{s}$ are permutable.
\end{theorem}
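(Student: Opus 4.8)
The plan is to translate the fuzzy permutability conditions of Definition 3.2 into statements about the level subgroups $\mu_t$ and $\nu_s$, using Proposition \ref{main} to guarantee that these level sets are genuine subgroups of $G$ whenever $t\in \mathrm{Im}\,\mu$ and $s\in \mathrm{Im}\,\nu$. The key observation is that the element-wise witness $x$ appearing in the definition of ``$\mu$ is permuted by $\nu$'' is exactly what is needed to realize a factorization in $\nu_s\mu_t$, and conversely. Throughout I would repeatedly use the elementary group-theoretic fact that, for subgroups $H,K\leq G$, the equality $HK=KH$ already forces $HK$ to be a subgroup and hence $HK=\langle H,K\rangle$; thus classical permutability of $\mu_t$ and $\nu_s$ in the sense of the paper's definition is equivalent to the single equality $\mu_t\nu_s=\nu_s\mu_t$.

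For the forward implication, I would assume $\mu$ and $\nu$ are permutable and fix $t\in \mathrm{Im}\,\mu$, $s\in \mathrm{Im}\,\nu$. To show $\mu_t\nu_s\subseteq\nu_s\mu_t$, take $a\in\mu_t$ and $b\in\nu_s$, apply the hypothesis that $\mu$ is permuted by $\nu$ to obtain $x\in G$ with $\mu(x^{-1}ab)\geq\mu(a)\geq t$ and $\nu(x)\geq\nu(b)\geq s$, and then read off $ab=x\,(x^{-1}ab)\in\nu_s\mu_t$. The reverse inclusion $\nu_s\mu_t\subseteq\mu_t\nu_s$ follows symmetrically from the hypothesis that $\nu$ is permuted by $\mu$. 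Combining the two inclusions gives $\mu_t\nu_s=\nu_s\mu_t$, so $\mu_t$ and $\nu_s$ are permutable.

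For the converse, I would assume $\mu_t$ and $\nu_s$ are permutable for all $t\in \mathrm{Im}\,\mu$ and $s\in \mathrm{Im}\,\nu$. Given arbitrary $a,b\in G$, set $t:=\mu(a)\in \mathrm{Im}\,\mu$ and $s:=\nu(b)\in \mathrm{Im}\,\nu$, so that $a\in\mu_t$ and $b\in\nu_s$. Then $ab\in\mu_t\nu_s=\nu_s\mu_t$, which furnishes a factorization $ab=xy$ with $x\in\nu_s$ and $y\in\mu_t$; this $x$ satisfies $\nu(x)\geq s=\nu(b)$ and $\mu(x^{-1}ab)=\mu(y)\geq t=\mu(a)$, so $\mu$ is permuted by $\nu$. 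Interchanging the roles of $\mu$ and $\nu$ shows $\nu$ is permuted by $\mu$, whence $\mu$ and $\nu$ are permutable.

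The computations in both directions are essentially bookkeeping once the dictionary ``factorization in $\nu_s\mu_t$ $\longleftrightarrow$ witness $x$'' is in place, so I do not anticipate a deep difficulty. The one point demanding care is the reduction of the definitional requirement $HK=KH=\langle H,K\rangle$ to the single condition $HK=KH$; I would isolate this as the standard lemma that a product of two subgroups is itself a subgroup precisely when the two subgroups permute, and invoke it at the close of each direction. Note that finiteness of $G$ is not actually needed here, since the whole argument takes place at the level of set products $\mu_t\nu_s$ rather than of suprema in the fuzzy product $\mu\, o\, \nu$.
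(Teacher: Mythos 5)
Your proof is correct and follows essentially the same route as the paper's: both directions rest on the same dictionary between the witness $x$ in the definition of ``$\mu$ is permuted by $\nu$'' and a factorization $ab=x(x^{-1}ab)\in\nu_s\mu_t$, with the converse obtained by taking $t=\mu(a)$, $s=\nu(b)$ and reading the witness off a factorization in $\nu_s\mu_t$. Your explicit reduction of the condition $HK=KH=\langle H,K\rangle$ to the single equality $HK=KH$ is a small point of rigor the paper leaves implicit, but it does not change the argument.
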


\begin{proof}
Let $\mu$ and $\nu$ be permutable. Let $t\in Im\mu,s\in Im\nu$. If
$a\in\mu_{t}$ and $b\in\nu_{s}$ then $\mu(a)\geq t,\nu(b)\geq s$.
We know that $\mu$ is permuted by $\nu$. Then there that exists
$x\in G$ such that $\mu(x^{-1}ab)\geq t$ and $\nu(x)\geq s$, this
means that $x^{-1}ab\in\mu_{t}$ and $x\in\nu_{s}$. So that
$ab=x(x^{-1}ab)$. If $a\in\nu_{s},b\in\mu_{t}$, then $\mu(b)\geq
t,\nu(a)\geq s$. So that there exists $y\in G$ such that
$\nu(y^{-1}ab)\geq\nu(a)\geq s$ and $\mu(y)\geq\mu(b)\geq t$, this
means that $y^{-1}ab\in\nu_{s}$ and $y\in\mu_{t}$. So that
$ab=y(y^{-1}ab)$, consequently $\mu_{t}\nu_{s}=\nu_{s}\mu_{t}$.
Now let $\mu_{t}\nu_{s}=\nu_{s}\mu_{t},\forall t\in Im\mu,s\in
Im\nu$ and let a and b be two arbitrary elements of G. Let
$r=\mu(a),s=\nu(b)$, then elements exist for example
$a'\in\mu_{t},b'\in\nu_{s}$ such that $ab=a'b'$, then
$b'^{-1}ab=a'$, this implies $\mu(b'^{-1}ab)=\mu(a')\geq
t=\mu(a)$. Hence $b'\in\nu_{s}$, then $\nu(b')\geq s=\nu(b)$.
Therefore $\mu$ is permuted by $\nu$. Similarly $\nu$ is permuted
by $\mu$.
\end{proof}

\begin{proposition}\label{p31}
Let $\mu$ and $\nu$ be fuzzy subgroups of G and $t\in Im\mu,s\in
Im\nu$ if $\mu$ and $\nu$ be permutable then\\
(1) If $t\leq s$ then there exists $a\in G$ such that
$\nu(a)\geq t$.\\
(2) If $s\leq t$ then there exists $b\in G$ such that $\mu(b)\geq
s$.
\end{proposition}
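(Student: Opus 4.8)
The plan is to prove each part directly from the definition of ``permuted by,'' feeding the extremal witnesses supplied by the hypotheses $t\in Im\,\mu$ and $s\in Im\,\nu$ into the permutation condition, and then reading off the conclusion from the comparison between $t$ and $s$.

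First I would prove (1). Since $t\in Im\,\mu$, I fix $a_{0}\in G$ with $\mu(a_{0})=t$, and since $s\in Im\,\nu$, I fix $b_{0}\in G$ with $\nu(b_{0})=s$. Because $\mu$ and $\nu$ are permutable, $\mu$ is in particular permuted by $\nu$, so applying the defining condition to the pair $(a_{0},b_{0})$ yields an element $x\in G$ with $\mu(x^{-1}a_{0}b_{0})\geq\mu(a_{0})$ and $\nu(x)\geq\nu(b_{0})$. The second inequality gives $\nu(x)\geq\nu(b_{0})=s$, and since $t\leq s$ I obtain $\nu(x)\geq t$. Thus $a:=x$ is the required element.

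Next, (2) is entirely symmetric. Assuming $s\leq t$, I fix $a_{1}$ with $\nu(a_{1})=s$ and $b_{1}$ with $\mu(b_{1})=t$, and now use that $\nu$ is permuted by $\mu$. Applying that condition to $(a_{1},b_{1})$ produces $y\in G$ with $\nu(y^{-1}a_{1}b_{1})\geq\nu(a_{1})$ and $\mu(y)\geq\mu(b_{1})=t\geq s$, so $b:=y$ satisfies $\mu(b)\geq s$.

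I do not anticipate a serious obstacle; the only point requiring care is the correct pairing of arguments in the permutation condition. One must feed the value-$t$ witness of $\mu$ and the value-$s$ witness of $\nu$ into the clause that transfers a bound onto $\nu(x)$ (respectively $\mu(y)$), and then invoke the monotone comparison $t\leq s$ (respectively $s\leq t$) to finish. As a sanity check I would note that conditions $(G_{1})$ and $(G_{2})$ already force $\nu(e)\geq\nu(z)$ for every $z\in G$, whence $\nu(e)\geq s\geq t$ and $a=e$ would also work; I nonetheless prefer the argument above because it exhibits the role of the permutability hypothesis explicitly rather than bypassing it.
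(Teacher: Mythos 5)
Your proof is correct, but it takes a different route from the paper's. The paper does not use the pointwise definition of ``permuted by'' at all: it first invokes the level-set characterization (Theorem \ref{t31}) to get $\mu_{t}\nu_{s}=\nu_{s}\mu_{t}$, picks $a\in\mu_{t}$ and $b\in\nu_{s}$, writes $ab=a'b'$ with $a'\in\mu_{t}$, $b'\in\nu_{s}$, and then uses $(G_{1})$ to conclude $\mu(aa')\geq t$ and $\nu(bb')\geq s$, finishing by the comparison of $t$ and $s$. You instead feed the witnesses of $t\in Im\,\mu$ and $s\in Im\,\nu$ directly into the element-wise permutation condition and read the bound off $\nu(x)\geq\nu(b_{0})=s$ (resp.\ $\mu(y)\geq\mu(b_{1})=t$). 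Your version is more elementary in that it avoids the detour through Theorem \ref{t31} and the product elements $aa'$, $bb'$. Your closing sanity check is also well taken: the statement follows already from $s\in Im\,\nu$ together with $t\leq s$ (any $b_{0}$ with $\nu(b_{0})=s$ satisfies $\nu(b_{0})\geq t$, and likewise $\nu(e)\geq s$), so the permutability hypothesis is not actually needed; both your argument and the paper's invoke it only to exhibit its role.
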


\begin{proof}
We know that $\mu_{t},\nu_{s}\neq\emptyset$ then there exist a and
b in G such that $\mu(a)\geq t$ and $\nu(b)\geq s$. Hence $\mu$
and $\nu$ are permutable then $\mu_{t}\nu_{s}=\nu_{s}\mu_{t}$,
then there are $a'\in\mu_{t}$ and $b'\in\nu_{s}$ such that
$ab=a'b'$. Therefore $\mu(aa')\geq\min\{\mu(a),\mu(a')\}\geq t$.
Similarly $\nu(bb')\geq s$. If $t\leq s$ then $\nu(bb')\geq s\geq
t$ and if $s\leq t$ then $\mu(aa')\geq t\geq s$.
\end{proof}

\begin{proposition}\label{p32}
Let $\mu$ and $\nu$ be fuzzy subgroups of G. If $\mu$ and $\nu$ be
permutable then $\mu o \nu$ is a fuzzy subgroup of G.
\end{proposition}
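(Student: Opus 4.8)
The plan is to reduce the statement to the commutativity criterion for fuzzy products. By Theorem \ref{t21}, the product $\mu o \nu$ is a fuzzy subgroup of $G$ if and only if $\mu o \nu=\nu o \mu$, so it suffices to prove that the two products agree pointwise. By the symmetry of the hypothesis (interchanging the roles of $\mu$ and $\nu$), it is enough to establish the single inequality $(\nu o \mu)(x)\geq(\mu o \nu)(x)$ for every $x\in G$; applying the same argument with $\mu$ and $\nu$ swapped then yields the reverse inequality and hence equality.

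To this end, I would fix $x\in G$. Since $G$ is finite, there are only finitely many factorizations $x=yz$ (each determined by the choice of $y\in G$, as $z=y^{-1}x$), so the supremum defining $(\mu o \nu)(x)=\vee\{\mu(y)\wedge\nu(z)\mid yz=x\}$ is actually attained. I choose $y,z\in G$ with $yz=x$ realizing this maximum and set $t=\mu(y)$ and $s=\nu(z)$; then $t\in Im\mu$, $s\in Im\nu$, $y\in\mu_{t}$, $z\in\nu_{s}$, and $(\mu o \nu)(x)=t\wedge s$.

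The key step is to transport this optimal factorization across the level sets. Because $\mu$ and $\nu$ are permutable, Theorem \ref{t31} gives $\mu_{t}\nu_{s}=\nu_{s}\mu_{t}$ for these particular $t,s$. Hence $x=yz\in\mu_{t}\nu_{s}=\nu_{s}\mu_{t}$, so there exist $z'\in\nu_{s}$ and $y'\in\mu_{t}$ with $x=z'y'$. Consequently $\nu(z')\geq s$ and $\mu(y')\geq t$, and from the definition of the product I obtain $(\nu o \mu)(x)\geq\nu(z')\wedge\mu(y')\geq s\wedge t=(\mu o \nu)(x)$. This proves the required inequality; by symmetry the reverse inequality holds as well, so $\mu o \nu=\nu o \mu$, and Theorem \ref{t21} finishes the argument.

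I expect the main obstacle to be purely bookkeeping rather than conceptual: one must ensure the level indices $t,s$ genuinely lie in $Im\mu$ and $Im\nu$ so that Theorem \ref{t31} applies to them, and one must invoke finiteness of $G$ to pass from the supremum in the definition of the product to an actual realizing factorization. Once a single optimal factorization is fixed, the level-set identity $\mu_{t}\nu_{s}=\nu_{s}\mu_{t}$ does all the work, and no analysis of the full lattice of level sets is needed.
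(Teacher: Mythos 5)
Your argument is correct, but it takes a genuinely different route from the paper's. The paper proves $(\mu o \nu)(x)\leq(\nu o \mu)(x)$ directly from the elementwise definition of permutability: for each factorization $x=y(y^{-1}x)$ it applies the defining condition with $a=y$, $b=y^{-1}x$ to produce $t\in G$ with $\mu(t^{-1}x)\geq\mu(y)$ and $\nu(t)\geq\nu(y^{-1}x)$, so that $\mu(y)\wedge\nu(y^{-1}x)\leq\nu(t)\wedge\mu(t^{-1}x)\leq(\nu o \mu)(x)$, and then takes the supremum over $y$; the reverse inequality follows by symmetry, and Theorem \ref{t21} finishes. You instead route the same inequality through the level-set characterization of Theorem \ref{t31}, using $\mu_{t}\nu_{s}=\nu_{s}\mu_{t}$ to re-factor $x$. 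Both are sound, but your detour has a cost: by insisting on a factorization that attains the supremum you need $G$ (or at least the images of $\mu$ and $\nu$) to be finite, whereas the statement and the paper's proof are for an arbitrary group. That cost is avoidable even within your approach: the level-set step works for \emph{every} factorization $x=yz$ (take $t=\mu(y)$, $s=\nu(z)$, which automatically lie in $Im\mu$ and $Im\nu$), yielding $(\nu o \mu)(x)\geq\mu(y)\wedge\nu(z)$ for each one, after which you take the supremum; no maximizing factorization, and hence no finiteness, is needed. With that adjustment your proof is fully general and is arguably cleaner than the paper's, since it leans on the already-established Theorem \ref{t31} rather than re-running the definition of permutability from scratch.
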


\begin{proof}
Let $\mu$ and $\nu$ be permutable and $x\in G$. If $y\in G$ be an
arbitrary element then there exists $t\in G$ such that
$\mu(t^{-1}yy^{-1}x)\geq\mu(y)$ and $\nu(t)\geq\nu(y^{-1}x)$, so
that $\mu(y)\wedge\nu(y^{-1}x)\leq\mu(t^{-1}x)\wedge\nu(t)$.
Therefore $\mu(y)\wedge\nu(y^{-1}x)\leq\sup_{z\in G}
\{\nu(z)\wedge\mu(z^{-1}\}$, means that $(\mu o
\nu)(x)\leq(\nu o \mu)(x)$. Similarly $(\nu o \mu)(x)\leq(\mu o
\nu)(x)$ because $\nu$ is permuted by $\mu$.
\end{proof}

\begin{example}
Let G be symmetric group $S_{3}$. Define $\mu$ and $\nu$ as follow:\\
$ \mu(x)=\left\{
\begin{matrix}
  1 & x=e \\
  \frac{1}{2} & x=b \\
  \frac{1}{3} & else
\end{matrix}
\right. ,~~~~~~~~~ $ $ \nu(x)=\left\{
\begin{matrix}
  1 & x=e \\
  \frac{1}{2} & x=ab \\
  \frac{1}{3} & else
\end{matrix}
\right.
$\\
Clearly, $\mu o \nu=\mu$, but $\mu$ is not permuted by $\nu$.
\end{example}

\begin{theorem}\label{t32}
Let $\mu$ and $\nu$ be fuzzy subgroups of G, then $\mu$ and $\nu$
are mutually permutable if and if for any $t\in Im\mu,s\in Im\nu$,
$\mu_{t},\nu_{s}$ are mutually permutable.
\end{theorem}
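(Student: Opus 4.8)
The plan is to mirror the structure of the proof of Theorem~\ref{t31}, but to replace the single product equality $\mu_t\nu_s=\nu_s\mu_t$ by the stronger requirement defining mutual permutability of subgroups: $\mu_t$ permutes with \emph{every} subgroup of $\nu_s$ and $\nu_s$ permutes with every subgroup of $\mu_t$. Throughout, Proposition~\ref{main} guarantees that each level set $\mu_t$ (for $t\in Im\mu$) and $\nu_s$ (for $s\in Im\nu$) is a subgroup, so all set products below make sense. I would split the equivalence into its two implications and, in each, reduce the statement to the elementary dictionary between the fuzzy inequalities $\mu(l_1^{-1}al)\geq\mu(a)$, $\mu(lal_2^{-1})\geq\mu(a)$ and the membership relations $l_1^{-1}al\in\mu_t$, $lal_2^{-1}\in\mu_t$.

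For the forward implication, I assume $\mu$ and $\nu$ are mutually permutable and fix $t\in Im\mu$, $s\in Im\nu$. To see that $\mu_t$ permutes with an arbitrary subgroup $L$ of $\nu_s$, I take $a\in\mu_t$ and $l\in L$. Since $\mu$ is permuted by $\nu$ mutually and $L$ is a subgroup of $\nu_s$ with $s\in Im\nu$, there exist $l_1,l_2\in L$ with $\mu(l_1^{-1}al)\geq\mu(a)\geq t$ and $\mu(lal_2^{-1})\geq\mu(a)\geq t$; hence $l_1^{-1}al\in\mu_t$ and $lal_2^{-1}\in\mu_t$. Writing $al=l_1(l_1^{-1}al)$ gives $\mu_t L\subseteq L\mu_t$, and writing $la=(lal_2^{-1})l_2$ gives $L\mu_t\subseteq\mu_t L$, so $\mu_t L=L\mu_t$. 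By symmetry, using that $\nu$ is permuted by $\mu$ mutually, $\nu_s$ permutes with every subgroup of $\mu_t$, so $\mu_t$ and $\nu_s$ are mutually permutable.

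For the converse, I assume $\mu_t$ and $\nu_s$ are mutually permutable for all admissible $t,s$, and let $L$ be a subgroup of $\nu_b$ with $b\in Im\nu$. Given $a\in G$ and $l\in L$, I set $t=\mu(a)\in Im\mu$, so $a\in\mu_t$ and, by hypothesis, $\mu_t$ permutes with the subgroup $L$ of $\nu_b$, that is $\mu_t L=L\mu_t$. Then $al\in\mu_t L=L\mu_t$ yields a factorization $al=l_1 a'$ with $l_1\in L$, $a'\in\mu_t$, whence $\mu(l_1^{-1}al)=\mu(a')\geq t=\mu(a)$; similarly $la\in L\mu_t=\mu_t L$ produces $l_2\in L$ with $lal_2^{-1}\in\mu_t$, giving $\mu(lal_2^{-1})\geq\mu(a)$. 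Thus $\mu$ is permuted by $\nu$ mutually, and the symmetric argument shows $\nu$ is permuted by $\mu$ mutually.

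The routine part is the element chasing; the step that deserves the most care is the quantifier over \emph{all} subgroups $L$ of the relevant level set, since mutual permutability, unlike plain permutability in Theorem~\ref{t31}, is not captured by a single product of two level sets. The argument works because the definition of $\mu$ being permuted by $\nu$ mutually already ranges over every subgroup $L$ of every $\nu_b$, so it matches the subgroup-level condition once the inequalities are read as membership in the cut $\mu_t$; I would double-check that the admissibility constraints $t\in Im\mu$ and $s,b\in Im\nu$ needed for the level sets to be subgroups are exactly those produced by setting $t=\mu(a)$ and taking $b\in Im\nu$.
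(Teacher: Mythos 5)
Your proof is correct and takes essentially the same route as the paper's: in the forward direction you translate the fuzzy inequalities into membership in the cut $\mu_t$ to obtain $\mu_t L=L\mu_t$ for every subgroup $L$ of $\nu_s$ (and symmetrically), and in the converse you set $t=\mu(a)$ and read the factorizations $al=l_1a'$, $la=a''l_2$ coming from $\mu_tL=L\mu_t$ back as the required witnesses $l_1,l_2$. The only difference is cosmetic labeling of the witnesses; no gap.
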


\begin{proof}
Let $\mu$ and $\nu$ be mutually permutable. Let $a\in Im\mu$ and
$b\in Im\nu$. Also let $L\leq\nu_{b}, x\in\mu_{a}$ and $l\in L$,
then $\mu(x)\geq a$. We known that exists $l_{1}\in L$ such that
$\mu(l_{1}^{-1}xl)\geq\mu(x)$, this means that
$l_{1}^{-1}xl\in\mu_{a}$, so that $xl=l_{1}(l_{1}^{-1}xl)$.
Therefore $\mu_{a}L\subseteq L\mu_{a}$ and also there exists
$l_{2}\in L$ such that $\mu(lxl_{2}^{-1})\geq\mu(x)\geq a$. That
is, $lxl_{2}^{-1}\in\mu_{a}$. So that $lx=(lxl_{2}^{-1})l_{2}$,
therefore $L\mu_{a}\subseteq\mu_{a}L$. So $\mu_{a}L$ is a subgroup
of G. Similarly, we know that $\nu$ is permuted by $\mu$ mutually
then for any subgroup H of $\mu_{a}$, $H\nu_{b}=\nu_{b}H$. So
$\mu_{a}$ and $\nu_{b}$ are mutually permutable. Now let for any
$a\in Im\mu$ and $b\in Im\nu$, $\mu_{a}$ and $\nu_{b}$ be mutually
permutable. Let $b\in Im\nu$ and $L\leq\nu_{b}$ and also $x\in G$
and $l\in L$. Let $r=\mu(x)$, so that $\mu_{r}$ and $\nu_{b}$ are
mutually permutable, therefore exist $l_{1}\in L$ and
$y\in\mu_{r}$ such that $lx=yl_{1}$, then $lxl_{1}^{-1}=y$, this
implies $lxl_{1}^{-1}\in\mu_{r}$ and $\mu(lxl_{1}^{-1})\geq
r=\mu(x)$. Also there exist $l_{2}\in L$ and $y'\in\mu_{r}$ such
that $xl=l_{2}y'$, then $l_{2}^{-1}xl=y'$, this implies
$l_{2}^{-1}xl\in\mu_{r}$ and $\mu(l_{2}^{-1}xl)\geq\mu(x)$.
Therefore $\mu$ is permuted by $\nu$ mutually. Similarly $\nu$ is
permuted by $\mu$ mutually.
\end{proof}


\section{Some properties of fuzzy quasinormal subgroup of a group}\label{s4}

\begin{definition}\cite{MKA}.
A fuzzy subgroup $\mu$ of G is called quasinormal if its level
subgroups are quasinormal subgroups of G.
\end{definition}

\begin{theorem}\label{main}
If $\mu$ is a fuzzy subgroup of group G, then the following
properties are equivalent:\\
($q_{1}$) For every subgroup L of G, we have been that for any
$a\in G,l\in L$ there exist $l_{1},l_{2}$ of L such that
$\mu(l^{-1}_{1}al)\geq\mu(a)$ and $\mu(lal^{-1}_{2})\geq\mu(a)$.
($q_{2}$) For any $a\in Im\mu$, $\mu_{a}$ is a quasinormal
subgroup of G.
\end{theorem}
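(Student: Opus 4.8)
The plan is to prove the two implications by translating between the pointwise membership inequalities of $(q_1)$ and the set-products $\mu_t L=L\mu_t$ that express quasinormality of the level subgroups in $(q_2)$; this mirrors the argument used for Theorem \ref{t32}. Throughout I will write $t$ for a value in $Im\mu$ and reserve $x,l$ for group elements, so as to avoid the overloading of the symbol $a$ in the statement. I would first recall that for each $t\in Im\mu$ the level set $\mu_t$ is indeed a subgroup of $G$ (the level-subgroup criterion quoted above), so that it is meaningful to ask whether $\mu_t$ is quasinormal, i.e.\ whether $\mu_t L=L\mu_t$ for every subgroup $L\leq G$; and I would note that for subgroups $H,K$ the equality $HK=KH$ already forces $HK$ to be the subgroup $\langle H,K\rangle$, so that proving the plain set equality suffices.

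For $(q_1)\Rightarrow(q_2)$ I would fix $t\in Im\mu$ and an arbitrary subgroup $L\leq G$, and establish the two inclusions separately. To get $\mu_t L\subseteq L\mu_t$, take $x\in\mu_t$ and $l\in L$; since $\mu(x)\geq t$, the first half of $(q_1)$ supplies $l_1\in L$ with $\mu(l_1^{-1}xl)\geq\mu(x)\geq t$, so $l_1^{-1}xl\in\mu_t$ and hence $xl=l_1(l_1^{-1}xl)\in L\mu_t$. Symmetrically, the second half of $(q_1)$ gives $l_2\in L$ with $\mu(lxl_2^{-1})\geq t$, whence $lx=(lxl_2^{-1})l_2\in\mu_t L$, yielding $L\mu_t\subseteq\mu_t L$. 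Combining the two inclusions gives $\mu_t L=L\mu_t$, so $\mu_t$ is quasinormal.

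For $(q_2)\Rightarrow(q_1)$ I would start from a subgroup $L\leq G$, an element $x\in G$ and $l\in L$, and set $t=\mu(x)\in Im\mu$. By hypothesis $\mu_t$ is quasinormal, so $\mu_t L=L\mu_t$. Since $xl\in\mu_t L=L\mu_t$, there are $l_1\in L$ and $y\in\mu_t$ with $xl=l_1 y$, so that $l_1^{-1}xl=y\in\mu_t$ and therefore $\mu(l_1^{-1}xl)\geq t=\mu(x)$. Likewise $lx\in L\mu_t=\mu_t L$ gives $y'\in\mu_t$ and $l_2\in L$ with $lx=y'l_2$, whence $lxl_2^{-1}=y'\in\mu_t$ and $\mu(lxl_2^{-1})\geq t=\mu(x)$. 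These are exactly the two inequalities required by $(q_1)$.

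The computations here are routine level-set manipulations, so I do not expect a genuine obstacle; the only points demanding care are matching the left inequality (the $l_1$ condition) to the inclusion $\mu_t L\subseteq L\mu_t$ and the right inequality (the $l_2$ condition) to $L\mu_t\subseteq\mu_t L$, and correctly choosing the level $t=\mu(x)$ in the second implication so that the generic element lands in $\mu_t$. The one structural fact worth isolating in advance is that $HK=KH$ is equivalent to $HK$ being a subgroup, which is precisely what lets me pass freely between the definition of quasinormality ($\mu_t L=L\mu_t=\langle\mu_t,L\rangle$) and the plain set equality I actually verify.
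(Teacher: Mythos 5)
Your proposal is correct and follows essentially the same route as the paper's own proof: both directions are handled by the same level-set translation, with $(q_1)\Rightarrow(q_2)$ proved via the two inclusions $\mu_t L\subseteq L\mu_t$ and $L\mu_t\subseteq\mu_t L$, and the converse by choosing $t=\mu(x)$ and factoring $xl$ and $lx$ through $\mu_t L=L\mu_t$. Your write-up is in fact slightly more careful than the paper's (it fixes the paper's typo $ix=y'l_2$ and makes explicit the standard fact that $HK=KH$ forces $HK=\langle H,K\rangle$), but there is no substantive difference in approach.
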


\begin{proof}
Assume firstly the validity of ($q_{1}$). Let $a\in Im\mu$ and
$L\leq G$. If $x\in\mu_{a}, l\in L$ then there exists $l_{1}\in L$
such that $\mu(l_{1}^{-1}xl)\geq\mu(x)\geq a$, this means that
$l_{1}^{-1}xl\in\mu_{a}$. So that $xl=l_{1}(l_{1}^{-1}xl)$. Also
let $y\in\mu_{a},l'\in L$, therefore there exists $l_{2}\in L$
such that $\mu(l'yl_{2}^{-1})\geq\mu(y)$. So
$\mu(l'yl_{2}^{-1})\geq a$, this means that
$l'yl_{2}^{-1}\in\mu_{a}$, Therefore $l'y=(l'yl_{2}^{-1})l_{2}$,
consequently $L\mu_{a}=\mu_{a}L$. Hence ($q_{1}$) implies
($q_{2}$). Assume next the validity of ($q_{2}$). Let $L\subseteq
G$ and $x\in G,l\in L$. If $r=\mu(x)$ then there exist
$y\in\mu_{r}$ and $l_{1}\in L$ such that $xl=l_{1}y$, so
$\mu(l_{1}^{-1}xl)\geq r=\mu(x)$. Similarly there exist
$y'\in\mu_{r},l_{2}\in L$ such that $ix=y'l_{2}$. Then
$\mu(lxl_{2}^{-1})\geq\mu(x)$. Hence ($q_{2}$) implies ($q_{1}$).
\end{proof}

\begin{corollary}\label{main}
Let $\mu$ be a fuzzy subgroup of G. Then $\mu$ is a fuzzy
quasinormal subgroup if and only if for every subgroup L of G, we
have been that for any $a\in G,l\in L$ there exist $l_{1},l_{2}$
of L such that $\mu(l^{-1}_{1}al)\geq\mu(a)$ and
$\mu(lal^{-1}_{2})\geq\mu(a)$.
\end{corollary}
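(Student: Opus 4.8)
The plan is to recognize this corollary as an immediate reformulation of the preceding theorem (the equivalence of ($q_{1}$) and ($q_{2}$)) once the defining condition for fuzzy quasinormality is unwound. By the definition taken from \cite{MKA}, the statement ``$\mu$ is a fuzzy quasinormal subgroup'' means precisely that every level subgroup of $\mu$ is a quasinormal subgroup of $G$. So the whole task reduces to showing that this defining condition coincides with condition ($q_{2}$), namely that $\mu_{a}$ is quasinormal for every $a\in Im\mu$; the corollary then follows by invoking the theorem's equivalence ($q_{1}$)$\Leftrightarrow$($q_{2}$).

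First I would pin down exactly which cut sets $\mu_{a}$ are subgroups, using the proposition quoted from \cite[Lemma 1.2.5]{MM}: $\mu_{a}$ is a subgroup of $G$ exactly for $a\in\mu(G)\cup\{b\in I\mid b\leq\mu(e)\}$. These are the level subgroups referred to in the definition. Next I would observe that, because $G$ is finite and hence $Im\mu=\mu(G)$ is finite, the distinct level subgroups are already exhausted by the family $\{\mu_{a}\mid a\in Im\mu\}$: for any threshold $b\leq\mu(e)$ that is not itself a membership value, the cut set $\mu_{b}$ coincides with $\mu_{a}$, where $a$ is the least element of $Im\mu$ with $a\geq b$. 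Consequently ``every level subgroup of $\mu$ is quasinormal'' holds if and only if ``$\mu_{a}$ is quasinormal for every $a\in Im\mu$'', which is precisely ($q_{2}$).

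Having identified the definition with ($q_{2}$), the remaining step is purely formal: I would apply the preceding theorem, which asserts ($q_{1}$)$\Leftrightarrow$($q_{2}$) for any fuzzy subgroup $\mu$ of $G$. Chaining the two equivalences yields ``$\mu$ is fuzzy quasinormal'' $\Leftrightarrow$ ($q_{2}$) $\Leftrightarrow$ ($q_{1}$), and ($q_{1}$) is exactly the stated condition that for every subgroup $L\leq G$ and all $a\in G$, $l\in L$ there exist $l_{1},l_{2}\in L$ with $\mu(l_{1}^{-1}al)\geq\mu(a)$ and $\mu(lal_{2}^{-1})\geq\mu(a)$.

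The only point requiring a little care --- and the part I would regard as the main (though minor) obstacle --- is the bookkeeping in the second paragraph: verifying that the level subgroups named in the definition really are captured by the index set $Im\mu$ used in ($q_{2}$), so that no ``extra'' quasinormality conditions are silently dropped or added. Since $G$ is finite this is routine, but it is the one place where the definition and the theorem must be matched carefully rather than quoted verbatim.
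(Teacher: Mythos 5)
Your proposal is correct and is essentially the paper's own argument: the paper dismisses this corollary as ``Straightforward,'' and the intended proof is exactly what you give, namely matching the definition of fuzzy quasinormality (all level subgroups quasinormal) with condition ($q_2$) and then invoking the preceding theorem's equivalence ($q_1$)$\Leftrightarrow$($q_2$). Your extra care in checking that cut sets at thresholds $b\leq\mu(e)$ outside $Im\mu$ coincide with cuts at image values is a worthwhile detail the paper omits.
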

\begin{proof}
Straightforward.
\end{proof}

\begin{theorem}\label{main}\cite[Theorem 4.3.13]{MKA}.
Let $\mu$ be a fuzzy subgroup of G with finite image. Then $\mu$
is fuzzy quasinormal if and only if $\mu o \nu=\nu o \mu$, for all
fuzzy subgroups $\nu$ of group G.
\end{theorem}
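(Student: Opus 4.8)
The plan is to prove the two implications separately, leaning on the level-set machinery of Sections~\ref{s3} and \ref{s4}. Throughout I will use that, by the earlier characterization of fuzzy quasinormal subgroups, $\mu$ being fuzzy quasinormal is equivalent to every level subgroup $\mu_{t}$ ($t\in Im\mu$) being quasinormal in $G$, i.e.\ permuting with every subgroup of $G$.

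For the forward direction, suppose $\mu$ is fuzzy quasinormal and let $\nu$ be an arbitrary fuzzy subgroup of $G$. First I would fix $t\in Im\mu$ and $s\in Im\nu$. Since $\mu_{t}$ is quasinormal it permutes with the subgroup $\nu_{s}$, so $\mu_{t}\nu_{s}=\nu_{s}\mu_{t}$; that is, $\mu_{t}$ and $\nu_{s}$ are permutable as subgroups. As this holds for all such $t,s$, Theorem~\ref{t31} yields that $\mu$ and $\nu$ are permutable. Proposition~\ref{p32} then shows that $\mu o \nu$ is a fuzzy subgroup, and finally Theorem~\ref{t21} gives $\mu o \nu=\nu o \mu$. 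Note that this half does not use the finiteness of $Im\mu$.

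For the converse, assume $\mu o \nu=\nu o \mu$ for every fuzzy subgroup $\nu$; I want to show each $\mu_{a}$ ($a\in Im\mu$) is quasinormal, i.e.\ $\mu_{a}H=H\mu_{a}$ for every subgroup $H\leq G$. The key device is to feed the hypothesis the characteristic fuzzy subgroup $\chi_{H}$ defined by $\chi_{H}(x)=1$ for $x\in H$ and $\chi_{H}(x)=0$ otherwise, which is a fuzzy subgroup because $H$ is a subgroup. By hypothesis $\mu o \chi_{H}=\chi_{H} o \mu$, so by Theorem~\ref{t21} the product $\mu o \chi_{H}$ is a fuzzy subgroup, whence all of its level cuts are subgroups of $G$. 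The crucial computation is to identify these cuts: I claim $(\mu o \chi_{H})_{a}=\mu_{a}H$ for each $a\in Im\mu$. Indeed $(\mu o \chi_{H})(x)=\sup\{\mu(y):y\in xH\}$, and here the finiteness of $Im\mu$ is essential, since it guarantees the supremum is attained; consequently $x\in(\mu o \chi_{H})_{a}$ if and only if $xH$ meets $\mu_{a}$, which is exactly the condition $x\in\mu_{a}H$.

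Having established that $\mu_{a}H$ is a subgroup of $G$, I would invoke the standard fact that a product $AB$ of two subgroups is again a subgroup if and only if $AB=BA$; applied to $A=\mu_{a}$ and $B=H$ this gives $\mu_{a}H=H\mu_{a}$. Since $H$ was arbitrary, $\mu_{a}$ is quasinormal for every $a\in Im\mu$, and therefore $\mu$ is fuzzy quasinormal. The main obstacle is precisely the converse's level-cut computation: controlling $(\mu o \chi_{H})_{a}$ requires the defining supremum to be a maximum, and this is exactly where the hypothesis of finite image enters; the remainder reduces to the subgroup-product criterion and the equivalences already proven above.
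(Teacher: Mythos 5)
This theorem is not proved in the paper at all --- it is imported verbatim as \cite[Theorem 4.3.13]{MKA} --- so there is no internal argument to compare against; what you have done is supply the missing proof, and your argument is correct. The forward direction is exactly the right assembly of the paper's own machinery: quasinormality of each $\mu_t$ gives $\mu_t\nu_s=\nu_s\mu_t$ for all $t\in Im\mu$, $s\in Im\nu$, Theorem \ref{t31} upgrades this to permutability of $\mu$ and $\nu$, and Proposition \ref{p32} together with Theorem \ref{t21} yields $\mu o\nu=\nu o\mu$; you are also right that finiteness of the image plays no role here. The converse via the characteristic function $\chi_H$ of a subgroup $H$ is the standard device, and you put the finite-image hypothesis in exactly the place where it is needed: without it, $(\mu o\chi_H)(x)=\sup\{\mu(y):y\in xH\}\geq a$ need not force $xH\cap\mu_a\neq\emptyset$, so the identification $(\mu o\chi_H)_a=\mu_a H$ could fail. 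Two small points worth making explicit: first, to conclude that the cut $(\mu o\chi_H)_a$ is a subgroup you should note that $a\leq(\mu o\chi_H)(e)$, which holds because $(\mu o\chi_H)(e)=\sup_{y\in H}\mu(y)\geq\mu(e)\geq a$ for every $a\in Im\mu$, so the cut falls in the range covered by the level-subgroup criterion; second, the paper's Definition of a quasinormal fuzzy subgroup asks that the \emph{level subgroups} be quasinormal, which is precisely the family $\mu_a$ with $a\in Im\mu$ that you control, so the conclusion matches the definition. No gaps.
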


\begin{corollary}\label{main}
Let $\mu$ be a fuzzy subgroup of G with finite image. Then $\mu o
\nu=\nu o \mu$, for all fuzzy subgroups $\nu$ of group G if and
only if for every subgroup L of G, we have been that for any $a\in
G,l\in L$ there exist $l_{1},l_{2}$ of L such that
$\mu(l^{-1}_{1}al)\geq\mu(a)$ and $\mu(lal^{-1}_{2})\geq\mu(a)$.
\end{corollary}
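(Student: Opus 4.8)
The plan is to obtain this corollary as a direct consequence of the two results that immediately precede it, using fuzzy quasinormality of $\mu$ as the common pivot between the two biconditionals. Both preceding statements already express one of the two conditions of the corollary in terms of $\mu$ being fuzzy quasinormal, so the work reduces to stitching them together.

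First I would invoke the cited theorem \cite[Theorem 4.3.13]{MKA}: under the standing hypothesis that $\mu$ has finite image, $\mu$ is fuzzy quasinormal if and only if $\mu \circ \nu = \nu \circ \mu$ for every fuzzy subgroup $\nu$ of $G$. This identifies the product-commutativity condition (A) with fuzzy quasinormality of $\mu$. Next I would invoke the earlier corollary characterizing fuzzy quasinormality, which asserts — with no restriction on the image — that $\mu$ is a fuzzy quasinormal subgroup if and only if for every subgroup $L$ of $G$ and all $a \in G,\, l \in L$ there exist $l_1, l_2 \in L$ with $\mu(l_1^{-1} a l) \geq \mu(a)$ and $\mu(l a l_2^{-1}) \geq \mu(a)$; this identifies the level-subgroup condition (B) with fuzzy quasinormality. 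Composing the two equivalences through the common middle term ``$\mu$ is fuzzy quasinormal'' yields the stated equivalence of (A) and (B).

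There is no genuine obstacle here, which is why the statement can be dispatched in a line or two; the only point demanding care is bookkeeping of hypotheses. The finite-image assumption is needed solely to license \cite[Theorem 4.3.13]{MKA} (the direction relating commuting products to quasinormality), whereas the characterization of quasinormality by condition (B) holds unconditionally. I would therefore make explicit that the finite-image hypothesis is consumed exactly once, in the application of the cited theorem, and that the remaining implication is free. In short, the corollary is a repackaging of fuzzy quasinormality, routed once through the product criterion of \cite[Theorem 4.3.13]{MKA} and once through the level-subgroup condition $q_1$ established earlier, so the proof is straightforward.
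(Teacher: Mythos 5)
Your proposal is correct and matches the paper's intent exactly: the paper's proof is simply ``Straightforward,'' and the positioning of the corollary immediately after the characterization of fuzzy quasinormality by condition $q_1$ and the cited Theorem 4.3.13 of \cite{MKA} shows that the intended argument is precisely the composition of those two equivalences through fuzzy quasinormality. Your additional bookkeeping remark that the finite-image hypothesis is consumed only in the application of the cited theorem is accurate and slightly more careful than the paper itself.
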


\begin{proof}
Straightforward.
\end{proof}

\begin{corollary}\label{main}
Let $\mu$ be a fuzzy normal subgroup of group G. Then $\mu$ is
fuzzy quasinormal subgroup of G.
\end{corollary}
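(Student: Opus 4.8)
The plan is to reduce this fuzzy statement to the classical fact that a normal subgroup is quasinormal, by passing to level subgroups. Recall that $\mu$ is fuzzy quasinormal precisely when each of its level subgroups is quasinormal in $G$, and that a subgroup $N$ of $G$ is quasinormal whenever it is normal, since normality gives $NH=HN$ for every subgroup $H$. Thus it suffices to show that fuzzy normality of $\mu$ forces every nonempty level set $\mu_{a}$ (with $a\in Im\mu$) to be a normal subgroup of $G$.

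First I would record the pointwise consequence of the defining identity $\mu(xy)=\mu(yx)$. Taking $x=g$ and $y=ag^{-1}$ yields $\mu(gag^{-1})=\mu(ag^{-1}g)=\mu(a)$ for all $g,a\in G$, so $\mu$ is constant on conjugacy classes. By the level criterion (Proposition \ref{main}), each $\mu_{a}$ is already a subgroup of $G$; and for $x\in\mu_{a}$ and any $g\in G$ the conjugation invariance just established gives $\mu(gxg^{-1})=\mu(x)\geq a$, so $gxg^{-1}\in\mu_{a}$. Hence $\mu_{a}$ is closed under conjugation, i.e.\ normal in $G$. Since a normal $\mu_{a}$ permutes with every subgroup $L\leq G$, each level subgroup of $\mu$ is quasinormal, which is exactly the definition of $\mu$ being fuzzy quasinormal.

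Alternatively, and avoiding any mention of level sets, one can invoke the equivalence ($q_{1}$)$\Leftrightarrow$($q_{2}$) proved in Theorem \ref{main}. Given a subgroup $L\leq G$, an element $a\in G$, and $l\in L$, simply choose $l_{1}=l_{2}=l\in L$; the fuzzy normal identity then gives $\mu(l^{-1}al)=\mu(all^{-1})=\mu(a)$ and $\mu(lal^{-1})=\mu(al^{-1}l)=\mu(a)$, so both required inequalities hold with equality. This verifies ($q_{1}$), hence ($q_{2}$), hence that $\mu$ is fuzzy quasinormal.

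I do not expect a genuine obstacle here: the statement is essentially a translation of the classical implication ``normal $\Rightarrow$ quasinormal'' through the level-subgroup dictionary. The only care required is the bookkeeping in rewriting $\mu(xy)=\mu(yx)$ so that the conjugates cancel correctly, together with the trivial observation that the existential quantifier ``there exist $l_{1},l_{2}\in L$'' in ($q_{1}$) is met by the single choice $l_{1}=l_{2}=l$.
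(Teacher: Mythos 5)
Your proof is correct, and since the paper's own proof is just the word ``Straightforward,'' your argument supplies exactly the details the authors omit: either route (normality of each level set $\mu_{a}$ via $\mu(gag^{-1})=\mu(a)$, or direct verification of ($q_{1}$) with $l_{1}=l_{2}=l$) is valid and is the intended reduction to the classical fact that normal subgroups are quasinormal.
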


\begin{proof}
Straightforward.
\end{proof}

\begin{corollary}\label{c4}
Let $\mu$ be a fuzzy quasinormal subgroup of group G. Then $\mu$ is
permuted by every fuzzy subgroup of G.
\end{corollary}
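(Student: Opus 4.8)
The plan is to chain together the characterizations that have already been established in the excerpt. The statement to prove, Corollary~\ref{c4}, asserts that if $\mu$ is a fuzzy quasinormal subgroup of $G$, then $\mu$ is permuted by every fuzzy subgroup $\nu$ of $G$. The natural route is to translate the hypothesis into the level-subgroup condition, and then show that this condition forces the defining inequality of ``$\mu$ is permuted by $\nu$'' to hold for arbitrary $a,b \in G$.

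First I would invoke Corollary~\ref{main} (the one immediately following Theorem~\ref{main} on $(q_1)\Leftrightarrow(q_2)$), which says that $\mu$ being fuzzy quasinormal is equivalent to condition $(q_1)$: for every subgroup $L$ of $G$ and any $a \in G$, $l \in L$, there exist $l_1, l_2 \in L$ with $\mu(l_1^{-1} a l) \geq \mu(a)$ and $\mu(l a l_2^{-1}) \geq \mu(a)$. The key idea is that this holds for \emph{every} ordinary subgroup $L$, in particular for the level subgroups $\nu_s$ of any fuzzy subgroup $\nu$. So given $\nu \in F(G)$ and arbitrary $a, b \in G$, I would set $s = \nu(b)$, so that $b \in \nu_s$ and $\nu_s$ is a genuine subgroup of $G$ by Proposition~\ref{main} (Lemma 1.2.5 of \cite{MM}).

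The central step is to produce the witness $x \in G$ required by the definition of ``permuted by.'' Applying $(q_1)$ with $L = \nu_s$, $l = b \in \nu_s$, and the given $a$, I obtain an $l_1 \in \nu_s$ satisfying $\mu(l_1^{-1} a b) \geq \mu(a)$. Taking $x = l_1$, I immediately get $\mu(x^{-1} a b) \geq \mu(a)$; and since $x = l_1 \in \nu_s$, I have $\nu(x) \geq s = \nu(b)$, which is exactly $\nu(x) \geq \nu(b)$. These are precisely the two inequalities in Definition~3.1(a), so $\mu$ is permuted by $\nu$. Because $\nu$ was an arbitrary fuzzy subgroup, $\mu$ is permuted by every fuzzy subgroup of $G$.

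I anticipate the main obstacle to be purely bookkeeping rather than conceptual: one must be careful that $(q_1)$ is stated for an arbitrary subgroup $L$ and arbitrary $a \in G$, $l \in L$, so that specializing $L$ to the level set $\nu_s$ and $l$ to the particular element $b$ is legitimate. The only genuine verification is that $\nu_s$ is indeed a subgroup, which is guaranteed by Proposition~\ref{main}, and that the chosen witness $l_1$ lies in $\nu_s$ so that its $\nu$-value is at least $s$. Everything then matches the definition verbatim, so the proof reduces to this single application of the quasinormality characterization, justifying the author's terse ``Straightforward.''
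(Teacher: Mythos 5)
Your proof is correct and is exactly the argument the authors leave as ``Straightforward'': you translate fuzzy quasinormality into condition $(q_1)$ via the corollary to Theorem on $(q_1)\Leftrightarrow(q_2)$, specialize $L$ to the level subgroup $\nu_s$ with $s=\nu(b)$ (a genuine subgroup since $s\in Im\,\nu$), and read off the witness $x=l_1$ satisfying both inequalities in the definition of ``permuted by.'' No gaps; this matches the intended route given the corollary's placement immediately after that characterization.
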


\begin{proof}
Straightforward.
\end{proof}

\section{on the natural equivalence of fuzzy subgroups of a finite group}\label{s5}
Whenever possible we follow the notation and terminology of \cite{NI}.\\
The dihedral group $D_{2n}$ ($n \geq 2$) is the symmetry group of a regular polygon with $n$ sides and it
has the order $2n$. The most convenient abstract description of $D_{2n}$ is obtained by using its generators:\\
a rotation $\alpha$ of order $n$ and a reflection $\beta$ of order 2. Under these notations, we have
$$
D_{2n}=\langle \alpha,\beta | \alpha^n=\beta^2=1, \beta\alpha\beta=\alpha^{-1}\rangle.
$$
\begin{definition}
Let $G$ be a group and $\mu \in F(G)$. The set $\{x\in
G|\mu(x)>0\}$ is called the support of $\mu$ and denoted by
$supp\mu$.
\end{definition}
Let $G$ be a group and $\mu \in F(G)$. We shall write $Im\mu$
for the image set of $\mu$ and $F_{\mu}$ for the family
$\{\mu_{t}|t\in Im\mu\}$.
\begin{theorem}\cite{ZK}\label{t51}.
Let $G$ be a fuzzy group. If $\mu$ is a fuzzy subset of $G$, then $\mu\in F(G)$ if and only if for all $\mu_{t}\in F_{\mu}$, $\mu_{t}$ is a subgroup of $G$.
\end{theorem}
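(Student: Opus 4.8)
The plan is to prove the biconditional by establishing its two implications separately, relying on the definition of a fuzzy subgroup through conditions $(G_1)$ and $(G_2)$ together with the standard one-step subgroup criterion (a nonempty subset closed under $(x,y)\mapsto xy^{-1}$ is a subgroup). Throughout, $\mu_t=\{x\in G:\mu(x)\geq t\}$ denotes the $t$-cut and $F_\mu=\{\mu_t:t\in Im\mu\}$.

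First I would treat the forward direction. Assume $\mu\in F(G)$ and fix $t\in Im\mu$, so $\mu_t\neq\emptyset$. For $x,y\in\mu_t$ we have $\mu(x)\geq t$ and $\mu(y)\geq t$; applying $(G_2)$ gives $\mu(y^{-1})\geq\mu(y)\geq t$, and then $(G_1)$ gives $\mu(xy^{-1})\geq\mu(x)\wedge\mu(y^{-1})\geq t$. Hence $xy^{-1}\in\mu_t$, and since $\mu_t$ is a nonempty subset of $G$ closed under $(x,y)\mapsto xy^{-1}$, it is a subgroup. (One also checks $\mu(e)\geq\mu(x)$ for any $x$, using $(G_1)$ and $(G_2)$, so $e\in\mu_t$.) This covers every member of $F_\mu$.

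For the converse, assume each $\mu_t\in F_\mu$ is a subgroup and verify $(G_1)$ and $(G_2)$. For $(G_2)$, fix $x\in G$ and set $t=\mu(x)\in Im\mu$; then $x\in\mu_t$, so $x^{-1}\in\mu_t$ and $\mu(x^{-1})\geq t=\mu(x)$. For $(G_1)$, fix $x,y\in G$ and set $t=\mu(x)\wedge\mu(y)$. Since $t$ is the smaller of $\mu(x)$ and $\mu(y)$, it equals one of them and hence lies in $Im\mu$, so $\mu_t\in F_\mu$ is a subgroup; as $x,y\in\mu_t$ we get $xy\in\mu_t$, i.e. $\mu(xy)\geq t=\mu(x)\wedge\mu(y)$.

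The main obstacle is the (minor) bookkeeping point just used: the hypothesis only asserts that level sets at values $t\in Im\mu$ are subgroups, not at arbitrary $t\in[0,1]$, so in verifying $(G_1)$ one must confirm that $\mu(x)\wedge\mu(y)$ is actually attained by $\mu$. Because the infimum of two attained values is itself attained, this is immediate and the argument goes through. This result is essentially a restatement of the earlier Proposition cited from \cite{MM}, the only difference being the index set over which the level sets are required to be subgroups.
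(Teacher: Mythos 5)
The paper states this theorem as a citation to \cite{ZK} and gives no proof of its own, so there is nothing to compare against; judged on its own, your argument is correct. Both directions are the standard level-set computation, and you correctly handle the one genuine subtlety --- that in verifying $(G_1)$ the value $\mu(x)\wedge\mu(y)$ must lie in $Im\mu$, which holds because the minimum of two attained values is attained --- which is exactly what distinguishes this statement from the earlier Proposition quoted from \cite{MM}, where the level sets are indexed over the larger set $\mu(G)\cup\{b\in I\mid b\leq\mu(e)\}$.
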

Let $F_{1}(G)$ be the set of all fuzzy subgroups $\mu$ of G such
that $\mu(e)=1$ and let $\sim_{R}$ be an equivalence relation on
$F_{1}(G)$. We denote the set $\{\nu \in F_{1}(G)|\nu
\sim_{_{R}}\mu\}$  by $\frac{\mu}{\sim_{_{R}}}$ and the set
$\{\frac{\mu}{\sim_{_{R}}}|\mu\in F_{1}(G)\}$  by
$\frac{F_{1}(G)}{\sim_{_{R}}}$.
\begin{definition}\cite{MKA}.
Let $G$ be a group, and $\mu ,\nu\in F(G)$. $\mu$ is equivalent to $\nu$, written as  $\mu \sim \nu$ if
\begin{enumerate}
  \item $\mu(x) > \mu(y) \Leftrightarrow \nu(x)>\nu(y)$ for all $x,y \in G$.
  \item $\mu(x)=0 \Leftrightarrow  \nu(x)=0$ for all $x \in G$.
\end{enumerate}
\end{definition}
The number of the equivalence classes $\sim$ on $F_{1}(G)$ is
denoted by $s(G)$. We means the number of distinct fuzzy subgroups of $G$ is $s(G)$.
\begin{theorem}\label{t52}\cite{NI}.
Let $G$ be a finite group. The
number of  distinct fuzzy subgroups of $G$ such that their
support is exactly equal to $G$ is
$\frac{s(G)+1}{2}$.
\end{theorem}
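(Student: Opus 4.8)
The plan is to set up an explicit correspondence between the $\sim$-classes of $F_1(G)$ and the chains of subgroups of $G$ that terminate at $G$, and then to read off both counts from it. Throughout I use Theorem \ref{t51}: a fuzzy subset of $G$ lies in $F(G)$ exactly when all of its level sets are subgroups. Thus to each $\mu\in F_1(G)$ with image $1=t_0>t_1>\cdots>t_n\ge 0$ there is attached the \emph{flag} of distinct level subgroups $\mu_{t_0}\subsetneq\mu_{t_1}\subsetneq\cdots\subsetneq\mu_{t_n}=G$, where $\mu_{t_n}=G$ always, since $t_n=\min Im\mu$. Moreover $\operatorname{supp}\mu=G$ iff $0\notin Im\mu$, and by condition (2) of the equivalence this is a property of the whole $\sim$-class.

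First I would show that the $\sim$-class of $\mu$ is pinned down by just two pieces of data: its flag, and the single bit recording whether $\min Im\mu=0$ or $\min Im\mu>0$. Condition (1) of the equivalence says precisely that $\mu$ and $\nu$ induce the same total preorder on $G$ (the relation $\mu(x)>\mu(y)$ agrees with $\nu(x)>\nu(y)$), and having the same preorder is visibly the same as having the same flag. Since $0$ is the least element of $[0,1]$, the value $0$ can occur only on the bottom block $G\setminus\mu_{t_{n-1}}$ of this preorder; hence, once the flag is fixed, condition (2) adds nothing beyond the bit ``is the smallest value zero.'' Because $\mu(e)=1$, this bit can equal $0$ only when the flag has length $n\ge 1$: the constant function $\mu\equiv 1$, whose flag is the one-term chain $(G)$, can never attain the value $0$.

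Next I would check that every chain $K_0\subsetneq\cdots\subsetneq K_m=G$ ending at $G$ is realizable in each admissible way. Assigning strictly decreasing values $1=v_0>\cdots>v_m>0$ on the successive difference sets produces a full-support representative (bit $+$); assigning $1=v_0>\cdots>v_{m-1}>v_m=0$ produces a proper-support representative (bit $0$) whenever $m\ge 1$. Both are genuine fuzzy subgroups by Theorem \ref{t51}. Combined with the previous step this yields two bijections: the full-support $\sim$-classes correspond to all chains ending at $G$, while the proper-support $\sim$-classes correspond to exactly those chains of length $\ge 1$.

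Finally, writing $N$ for the number of chains of subgroups terminating at $G$, the two bijections give $N$ full-support classes and $N-1$ proper-support classes, the only excluded chain being the one-term chain $(G)$. Hence $s(G)=N+(N-1)=2N-1$, so the number of distinct fuzzy subgroups with support exactly $G$ is $N=\tfrac{s(G)+1}{2}$; in particular this also forces $s(G)$ to be odd. The one real obstacle is the first step: proving cleanly that condition (1) is equivalent to ``same flag,'' and that, with the flag fixed, condition (2) contributes exactly one bit, with the constant fuzzy subgroup appearing as the unique exceptional case that breaks a perfect pairing and produces the $+1$.
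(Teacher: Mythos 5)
Your proposal is correct and follows essentially the same route as the paper: the paper's map $f$, which keeps the chain $\varphi$ of level subgroups fixed and replaces the smallest membership value $\lambda_r$ by $0$, is exactly your ``flip the support bit'' bijection between non-constant full-support classes and proper-support classes, with the constant fuzzy subgroup $\mu^*$ as the unique unpaired class producing the $+1$. Your write-up is somewhat more explicit than the paper's about why a $\sim$-class is determined by the flag together with that single bit, but the decomposition $s(G)=2|U(G)|+1$ is identical.
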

\begin{proof}
Let
$$ U(G) = \{\frac{\mu}{\sim}|\mu\neq\mu^{*},\mu\in F(G),supp\mu=G\}$$
where $\mu^*$ is a fuzzy subgroup of $G$ and $\mu^*(x) = 1$ for all $x\in G$.
$$
V(G) = \{\frac{\mu}{\sim}|\mu\in F_1(G),supp\mu\subset G\}.
$$
Since $G$ is finite, we can define $\frac{\mu}{\sim}$ as follow:
$$
\frac{\mu}{\sim}=\frac{(\overbrace{1\cdots 1}^{n'_0}\overbrace{\lambda_1\cdots\lambda_1}^{n'_1}\cdots\overbrace{\lambda_r\cdots\lambda_r}^{n'_r})\varphi}{\sim}
$$
where $Im\mu=\{1,\mu_1,\cdots,\mu_r\}$, $1>\lambda_1>\cdots>\lambda_r>0$ and\\
$\varphi : G_0=(e)\subset G_1 \subset ... \subset G_{n_0}=\mu_1
\\
~~~~~~~~~~~~~~~~~~~~~~~~~~~~~~~~~~~~~~~~~~~~~\subset G_{n_0+1}\subset\cdots\subset G_{n_1}=\mu_{\alpha_1}
\\
~~~~~~~~~~~~~~~~~~~~~~~~~~~~~~~~~~~~~~~~~~~~~\subset G_{n_1+1} \subset\cdots\subset G_{n_2}=\mu_{\alpha_2}
\\
~~~~~~~~~~~~~~~~~~~~~~~~~~~~~~~~~~~~~~~~~~~~~~~~~\vdots
\\
~~~~~~~~~~~~~~~~~~~~~~~~~~~~~~~~~~~~~~~~~~~~~~\subset G_{n_{(r-1)}+1} \subset\cdots\subset G_{n_r}=\mu_{\alpha_r}= G
$
\\and $n'_0=n_0, n'_1=n_1-n_0$ and for all $i\in\{2,\cdots,r\}$, $n'_i=n_i-\sum\limits_{k=1}^{i-1} n_k$.\\
We define the map $f$:
$$
f:U(G)\rightarrow V(G)
$$
such that
$$
f(\frac{\mu}{\sim})=\frac{(\overbrace{1\cdots 1}^{n'_0}\overbrace{\lambda_1\cdots\lambda_1}^{n'_1}\cdots\overbrace{\lambda_{r-1}\cdots\lambda_{r-1}}^{n'_{r-1}}\overbrace{0\cdots 0}^{n'_r})\varphi}{\sim}
$$
It is easy to see that $f$ is one to one and onto. So $|U(G)|=|V(G)|$ and $s(G)=|U(G)|+|V(G)|+1$, therefore $s(G)=2|U(G)|+1$. Thus $|U(G)|=|V(G)|=\frac{s(G)-1}{2}$ and hence $|U(G)|+1=\frac{s(G)+1}{2}$.
\end{proof}
Let $G$ be a finite group. The
number of  distinct fuzzy subgroups of $G$ such that their
support is exactly equal to $G$ is denoted by $s^{\star}(G)$.
\begin{theorem}
Let $G$ be a finite group. Then the
number of  distinct fuzzy subgroups of $G$ such that their
support is exactly a subgroup of $G$ is
$\frac{s(G)-1}{2}$.
\end{theorem}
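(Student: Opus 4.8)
The plan is to obtain this count by complementary counting against Theorem~\ref{t52}, exploiting two facts: the support of a fuzzy subgroup is always a subgroup of $G$, and the support is an invariant of the $\sim$-equivalence class. First I would record that if $\mu\in F_{1}(G)$, then $supp\,\mu=\{x\in G\mid\mu(x)>0\}$ is a subgroup of $G$: for $x,y\in supp\,\mu$ axiom $(G_{1})$ gives $\mu(xy)\geq\mu(x)\wedge\mu(y)>0$, axiom $(G_{2})$ gives $\mu(x^{-1})\geq\mu(x)>0$, and $\mu(e)\geq\mu(x)>0$. Consequently the support of every fuzzy subgroup of $G$ is either $G$ itself or a proper subgroup of $G$, and these two possibilities are mutually exclusive and jointly exhaustive.

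Next I would verify that $supp$ descends to $\frac{F_{1}(G)}{\sim}$, that is, it is well defined on equivalence classes. This is immediate from condition $(2)$ in the definition of $\sim$: if $\mu\sim\nu$ then $\mu(x)=0\Leftrightarrow\nu(x)=0$, whence $supp\,\mu=supp\,\nu$. Therefore the splitting of the $s(G)$ distinct fuzzy subgroups into those with support exactly $G$ and those with support a proper subgroup of $G$ is a genuine partition of the whole set $\frac{F_{1}(G)}{\sim}$, and no class is counted twice or left out.

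The count then follows by subtraction. The total number of distinct fuzzy subgroups is $s(G)$, and by Theorem~\ref{t52} exactly $\frac{s(G)+1}{2}$ of them have support equal to $G$. Since by the first observation every remaining class has support a proper subgroup of $G$, the number of distinct fuzzy subgroups whose support is a proper subgroup of $G$ equals
$$
s(G)-\frac{s(G)+1}{2}=\frac{s(G)-1}{2}.
$$
Equivalently, this set is precisely the set $V(G)$ introduced in the proof of Theorem~\ref{t52}, whose cardinality was already shown there to be $\frac{s(G)-1}{2}$. There is no substantial obstacle here; the only point demanding a moment's care is confirming that the two cases (support $=G$ versus support a proper subgroup) exhaust $\frac{F_{1}(G)}{\sim}$ without overlap, which is exactly what the two preliminary observations guarantee.
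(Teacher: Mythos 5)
Your proof is correct and takes essentially the same route as the paper: the paper simply cites the proof of Theorem~\ref{t52}, where the set $V(G)$ of classes with support a proper subgroup was shown to have cardinality $\frac{s(G)-1}{2}$, and your complementary count $s(G)-\frac{s(G)+1}{2}$ is just that computation made explicit. Your added checks that the support is a subgroup and is a $\sim$-class invariant are details the paper leaves implicit, not a different argument.
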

\begin{proof}
By proof of theorem \ref{t52}, $|U(G)|=|V(G)|=\frac{s(G)-1}{2}$.
\end{proof}

\begin{theorem}\label{t53}\cite{NI}.
Let $G$ be a finite group and $H$ be a  subgroup of $G$. Then the
number of  distinct fuzzy subgroups of $G$ such that their
support is exactly equal to $H$ is
$\frac{s(H)+1}{2}$.
\end{theorem}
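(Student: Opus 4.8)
The plan is to reduce this statement to Theorem \ref{t52} by establishing a bijection between the distinct fuzzy subgroups of $G$ whose support is exactly $H$ and the distinct fuzzy subgroups of $H$ whose support is exactly $H$. Since Theorem \ref{t52} applied to the finite group $H$ gives $\frac{s(H)+1}{2}$ for the latter count, the assertion will follow once the bijection is in place.

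First I would set up the correspondence on individual fuzzy subgroups. Given a fuzzy subgroup $\mu$ of $G$ with $supp\mu=H$, restrict it to $\mu|_{H}:H\rightarrow[0,1]$. Because $\mu(x)>0$ precisely when $x\in H$, the restriction $\mu|_{H}$ is a fuzzy subgroup of $H$ whose support is all of $H$. Conversely, a fuzzy subgroup $\eta$ of $H$ with support $H$ extends to a fuzzy subset $\widehat{\eta}$ of $G$ by putting $\widehat{\eta}(x)=\eta(x)$ for $x\in H$ and $\widehat{\eta}(x)=0$ for $x\notin H$. Using Theorem \ref{t51} one checks that $\widehat{\eta}\in F(G)$: for $t\in Im\eta$ the level set $(\widehat{\eta})_{t}=\eta_{t}$ is a subgroup of $H$ and hence of $G$, while the level set at $0$ (present only when $H\neq G$) is all of $G$. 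These two operations are mutually inverse, so they give a bijection between fuzzy subgroups of $G$ with support $H$ and fuzzy subgroups of $H$ with support $H$.

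Next I would verify that restriction and extension respect the equivalence $\sim$ and therefore descend to a bijection on equivalence classes. The key point is that when $\mu$ and $\nu$ both have support exactly $H$, the two defining conditions of $\sim$ on $G$ are automatically satisfied at every pair of points meeting $G\setminus H$: if $x\in H$ and $y\notin H$ then $\mu(x)>0=\mu(y)$ and likewise $\nu(x)>0=\nu(y)$, while if $x,y\notin H$ then $\mu(x)=\mu(y)=0=\nu(x)=\nu(y)$. Consequently $\mu\sim\nu$ as fuzzy subgroups of $G$ holds if and only if $\mu|_{H}\sim\nu|_{H}$ as fuzzy subgroups of $H$, and the analogous equivalence for the extension map shows the induced map on classes is well defined and bijective in both directions.

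The step requiring the most care is exactly this bookkeeping at the boundary of the support: one must be sure that the $\sim$-class of $\mu$ over $G$ is determined entirely by its behaviour on $H$, so that no spurious classes arise from the values on $G\setminus H$ (which are forced to be $0$) and no two distinct classes over $H$ become identified over $G$. Once the bijection on equivalence classes is secured, Theorem \ref{t52} applied to $H$ counts the distinct fuzzy subgroups of $H$ with support exactly $H$ as $\frac{s(H)+1}{2}$, and transporting this count along the bijection yields the asserted number $\frac{s(H)+1}{2}$ of distinct fuzzy subgroups of $G$ with support exactly $H$.
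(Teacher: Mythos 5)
Your proposal is correct and follows essentially the same route as the paper: the paper's proof simply asserts that the distinct fuzzy subgroups of $G$ with support exactly $H$ are in bijection with those of $H$ with support exactly $H$, and then invokes Theorem \ref{t52} for $H$. You have merely filled in the details of that bijection (restriction/extension and compatibility with $\sim$) that the paper leaves as ``we can easily see.''
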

\begin{proof}
We can easily see that the number of distinct fuzzy subgroups of the group
G which their supports is exactly H is equal to number of distinct fuzzy subgroups
of H which their supports is exactly H and this number with the previous theorem
is equal to $\frac{s(H)+1}{2}$.	
\end{proof}

\begin{corollary}\cite{NI}.
Let $G$ be a finite group and $H$ be a  subgroup of $G$. Then the
number of  distinct fuzzy subgroups of $G$ such that their
support is exactly a subgroup of $H$ is
$\frac{s(H)-1}{2}$.
\end{corollary}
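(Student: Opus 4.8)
The plan is to mirror the proof of Theorem~\ref{t53}: I would transfer the counting problem from $G$ down to $H$ and then invoke the result, proved earlier for a finite group, that there are $\frac{s(G)-1}{2}$ distinct fuzzy subgroups whose support is a proper subgroup. The guiding idea is that a fuzzy subgroup of $G$ whose support lies strictly inside $H$ cannot ``see'' anything outside $H$, so it is nothing but a fuzzy subgroup of $H$ of the same kind.

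First I would observe that if $\mu$ is a fuzzy subgroup of $G$ with $supp\mu$ a proper subgroup of $H$, then $\mu(x)=0$ for every $x\notin H$, so $\mu$ is completely determined by its restriction $\mu|_{H}$, which is a fuzzy subgroup of $H$ with the same (proper) support. I would then set up the correspondence sending the $\sim$-class of such a $\mu$ to the $\sim$-class of $\mu|_{H}$, with inverse given by extension by $0$ on $G\setminus H$. Since restriction and extension-by-zero are inverse to one another, and since $\sim$ only records the order relation among the values together with the vanishing locus (conditions (1) and (2) of the definition of $\sim$), both maps are well defined on $\sim$-classes and mutually inverse. This produces a bijection between the distinct fuzzy subgroups of $G$ supported on a proper subgroup of $H$ and the distinct fuzzy subgroups of $H$ supported on a proper subgroup of $H$. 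Applying the theorem that counts the latter quantity — with $H$ now in the role of $G$ — gives exactly $\frac{s(H)-1}{2}$, which is the claimed value.

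The only point requiring care is the verification that this restriction/extension correspondence is genuinely well defined on $\sim$-classes and bijective; but because the support is forced to lie inside $H$, the fuzzy subgroup has no freedom outside $H$, so the check is routine and is precisely the reduction already used implicitly in Theorem~\ref{t53}. I therefore do not anticipate a real obstacle: the content of the corollary is inherited wholesale from the $G$-version of the statement through this transfer, exactly as Theorem~\ref{t53} is inherited from Theorem~\ref{t52}.
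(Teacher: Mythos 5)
Your proposal is correct and follows essentially the same route as the paper: the paper states this corollary without proof, but its proof of Theorem~\ref{t53} is exactly the same reduction (identify fuzzy subgroups of $G$ supported inside $H$ with fuzzy subgroups of $H$, then apply the $G$-version of the count to $H$), and you simply make the restriction/extension-by-zero bijection explicit before invoking the $\frac{s(H)-1}{2}$ formula. No gaps.
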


\begin{proposition}\cite{MKA}\label{p51}.
Let $n\in\mathbb{N}$. Then there are $2^{n+1}-1$ distinct
equivalence classes of fuzzy subgroups of $\mathbb{Z}_{p^n}$.
\end{proposition}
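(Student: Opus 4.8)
The plan is to turn the count of $\sim$-classes into a combinatorial count of chains in the subgroup lattice of $\mathbb{Z}_{p^n}$. First I would recall that $\mathbb{Z}_{p^n}$ is cyclic and has exactly one subgroup of each order $p^i$, so its subgroups form a single chain
$$
\{0\}=H_0\subsetneq H_1\subsetneq\cdots\subsetneq H_n=\mathbb{Z}_{p^n},\qquad |H_i|=p^i .
$$
Since $s(\mathbb{Z}_{p^n})$ is by definition $\left|\frac{F_1(\mathbb{Z}_{p^n})}{\sim}\right|$, the goal is to enumerate these classes.

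Next I would analyse an arbitrary $\mu\in F_1(\mathbb{Z}_{p^n})$ through its level sets. By Theorem~\ref{t51} each $\mu_t$ is a subgroup, hence equals some $H_i$; as $t$ decreases these subgroups only grow, so the distinct nonzero level sets of $\mu$ form a strictly increasing chain
$$
\mu_1=S_0\subsetneq S_1\subsetneq\cdots\subsetneq S_m=supp\,\mu ,
$$
where $S_0=\mu_1\ni e$ because $\mu(e)=1$, and each $S_j$ is one of the $H_i$. I would then unwind the defining conditions of $\sim$: condition (1) says that two members of $F_1(G)$ partition $G$ into level sets with the same relative ordering of values, and condition (2) says they have the same support; taken together these assert exactly that $\mu$ and $\nu$ possess the same chain $(S_0,\dots,S_m)$ of distinct positive level sets. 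Hence the $\sim$-class of $\mu$ is encoded faithfully by this chain alone, and the actual membership values are irrelevant.

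Finally I would exhibit the bijection. To each nonempty subset $\{H_{i_0},\dots,H_{i_m}\}$ of $\{H_0,\dots,H_n\}$, listed so that $i_0<\cdots<i_m$, I assign the fuzzy subset equal to $1$ on $H_{i_0}$, equal to a strictly decreasing sequence of values in $(0,1)$ on the successive shells $H_{i_j}\setminus H_{i_{j-1}}$, and equal to $0$ outside $H_{i_m}$; such values exist because $[0,1]$ is infinite, and Theorem~\ref{t51} confirms this is a fuzzy subgroup with value $1$ at $e$. Distinct subsets yield distinct level-set chains, hence inequivalent fuzzy subgroups, while the preceding paragraph shows that every $\sim$-class arises in this way. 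Thus $\frac{F_1(\mathbb{Z}_{p^n})}{\sim}$ is in bijection with the nonempty subsets of an $(n+1)$-element set, of which there are $2^{n+1}-1$.

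The step I expect to demand the most care is the reduction in the second paragraph, namely that $\sim$ collapses precisely the fuzzy subgroups sharing a level-set chain. The crux is that in $\mathbb{Z}_{p^n}$ the function $\mu$ is constant on each shell $H_i\setminus H_{i-1}$: any $x\in H_i\setminus H_{i-1}$ generates $H_i$, so the level subgroup $\mu_{\mu(x)}$ must contain all of $H_i$, forcing every element of the shell to share the value of $x$. Once this is in hand, conditions (1) and (2) of $\sim$ translate verbatim into equality of the chains, and the counting becomes immediate.
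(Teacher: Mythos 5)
Your argument is correct. Note that the paper does not actually prove this proposition; it is quoted from the reference \cite{MKA} without proof, so there is no in-paper argument to compare against. Your route --- reducing a $\sim$-class to the chain of its positive level subgroups (using the shell-constancy observation that any $x\in H_i\setminus H_{i-1}$ generates $H_i$), and then counting nonempty subsets of the $(n+1)$-element chain $\{H_0,\dots,H_n\}$ to get $2^{n+1}-1$ --- is the standard one, and it is exactly the chain-encoding the paper itself relies on elsewhere (the parametrization by $\varphi: G_0\subset G_1\subset\cdots$ in the proof of Theorem \ref{t52}, and the maximal-chain counts in Section \ref{s6}), so it fits the paper's framework seamlessly.
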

\section{Counting of the distinct fuzzy subgroups for some of finite groups}\label{s6}
Now we determine the number of distinct fuzzy
subgroups for some of the dihedral groups.
\begin{example}\label{e61}
Let $G$ be the dihedral group of order 4, then $s(G)=15$.\\
\begin{figure}[]
\begin{center}
\includegraphics[angle=0, scale=0.5]
{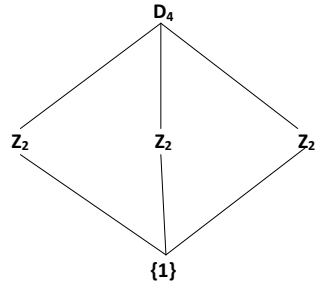} \caption{\small{Hasse diagram of $D_4$}}
\end{center}
\end{figure}
We know that $\frac{s(G)-1}{2}=s^{\star}(\{1\})+3s^{\star}(\mathbb{Z}_2)$, therefore $\frac{s(G)-1}{2}=7$. Thus $s(G)=15$.
\end{example}

\begin{example}\label{e62}
Let $G$ be the dihedral group of order 8, then $s(G)=63$.\\
\begin{figure}[]
\begin{center}
\includegraphics[angle=0, scale=0.5]
{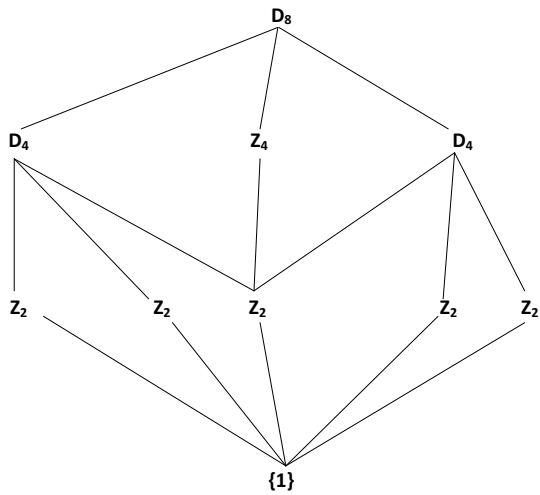} \caption{\small{Hasse diagram of $D_8$}}
\end{center}
\end{figure}
We know that $\frac{s(G)-1}{2}=s^{\star}(\{1\})+s^{\star}(\mathbb{Z}_4)+5s^{\star}(\mathbb{Z}_2)+2s^{\star}(D_4)$, therefore $\frac{s(G)-1}{2}=29$. Thus $s(G)=63$.
\end{example}
\begin{theorem}\label{t61}
Suppose that $p$ is a prime and $p\geq 3$. If $G$ is the dihedral
group of order $2p$, then $s(G)=4p+7$.
\end{theorem}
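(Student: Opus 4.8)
The plan is to count the equivalence classes of fuzzy subgroups of $G=D_{2p}$ by organising them according to their support, exactly as in Examples \ref{e61} and \ref{e62}. By the theorem preceding Theorem \ref{t53}, the number of distinct fuzzy subgroups whose support is a proper subgroup of $G$ equals $\frac{s(G)-1}{2}$; and since each such fuzzy subgroup has support equal to precisely one proper subgroup $H$, this quantity splits as the sum of the contributions $s^{\star}(H)$ over all proper subgroups $H$ of $G$. Hence the first task is to determine the full subgroup lattice of $D_{2p}$.

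First I would describe the subgroups of $G$. By Lagrange's theorem every subgroup has order dividing $2p$, so the possible orders are $1,2,p,2p$. Because $p\geq 3$ is prime, the rotation subgroup $\langle\alpha\rangle\cong\mathbb{Z}_p$ is the unique subgroup of order $p$, there is no subgroup of any intermediate order, and the only involutions are the $p$ reflections $\alpha^{i}\beta$ ($0\leq i\leq p-1$), each generating a distinct subgroup of order $2$ isomorphic to $\mathbb{Z}_2$. Thus the proper subgroups of $G$ are the trivial subgroup $\{1\}$, the $p$ reflection subgroups isomorphic to $\mathbb{Z}_2$, and the rotation subgroup isomorphic to $\mathbb{Z}_p$.

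Next I would evaluate the contribution of each proper subgroup via Theorem \ref{t53}, which gives $s^{\star}(H)=\frac{s(H)+1}{2}$. The trivial group satisfies $s(\{1\})=1$, and taking $n=1$ in Proposition \ref{p51} gives $s(\mathbb{Z}_2)=s(\mathbb{Z}_p)=3$; hence $s^{\star}(\{1\})=1$ and $s^{\star}(\mathbb{Z}_2)=s^{\star}(\mathbb{Z}_p)=2$. Summing the contributions over the lattice gives
$$
\frac{s(G)-1}{2}=s^{\star}(\{1\})+p\,s^{\star}(\mathbb{Z}_2)+s^{\star}(\mathbb{Z}_p)=1+2p+2=2p+3,
$$
and solving for $s(G)$ produces $s(G)=4p+7$, as claimed.

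The arithmetic at the end is immediate; the step that really needs care is the complete and correct enumeration of the subgroup lattice, since the whole count rests on there being exactly $p$ subgroups of order $2$ and a single subgroup of order $p$ with no intermediate subgroups. This is precisely where the hypothesis that $p$ is an odd prime is used, and it plays the role that reading off the Hasse diagrams plays in the two worked examples. A secondary point to confirm is the additivity of the count over distinct supports, namely that a fuzzy subgroup with proper support determines a unique proper subgroup as its support, so that the contributions $s^{\star}(H)$ may simply be added.
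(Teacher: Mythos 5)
Your proposal is correct and follows essentially the same route as the paper: both decompose $\frac{s(G)-1}{2}$ as a sum of $s^{\star}(H)$ over the proper subgroups of $D_{2p}$ (the trivial subgroup, the $p$ reflection subgroups of order $2$, and the unique rotation subgroup of order $p$), obtaining $1+2p+2=2p+3$ and hence $s(G)=4p+7$. Your version is in fact slightly more careful than the paper's, since you justify the subgroup enumeration via Lagrange and the classification of involutions rather than asserting the maximal chains outright.
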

Proof. We know that $D_{2p}$ has the following maximal chains each of which can be identified with the chain $D_{2p}\supset\mathbb{Z}_{p}\supset\{0\}$ and $D_{2p}\supset\mathbb{Z}_{2}\supset\{0\}$ whose the number is $p$. Now 2 is the number of distinct fuzzy subgroups whose support is $\mathbb{Z}_{p}$, $2^1p$ is the number of distinct fuzzy subgroups whose support is $\mathbb{Z}_2$, and $2^0$ is the number of fuzzy subgroups whose support is $\{0\}$. Thus $\frac{s(G)-1}{2}=2p+2+1$, therefore $s(G)=4p+7$.

\begin{example}\label{e65}
Let $S_3=\langle \alpha, \beta | \alpha^3=\beta^2=(\alpha\beta)^2=1 \rangle$. By Hasse diagram of $S_3$,
\begin{figure}[]
\begin{center}
\includegraphics[angle=0, scale=0.5]
{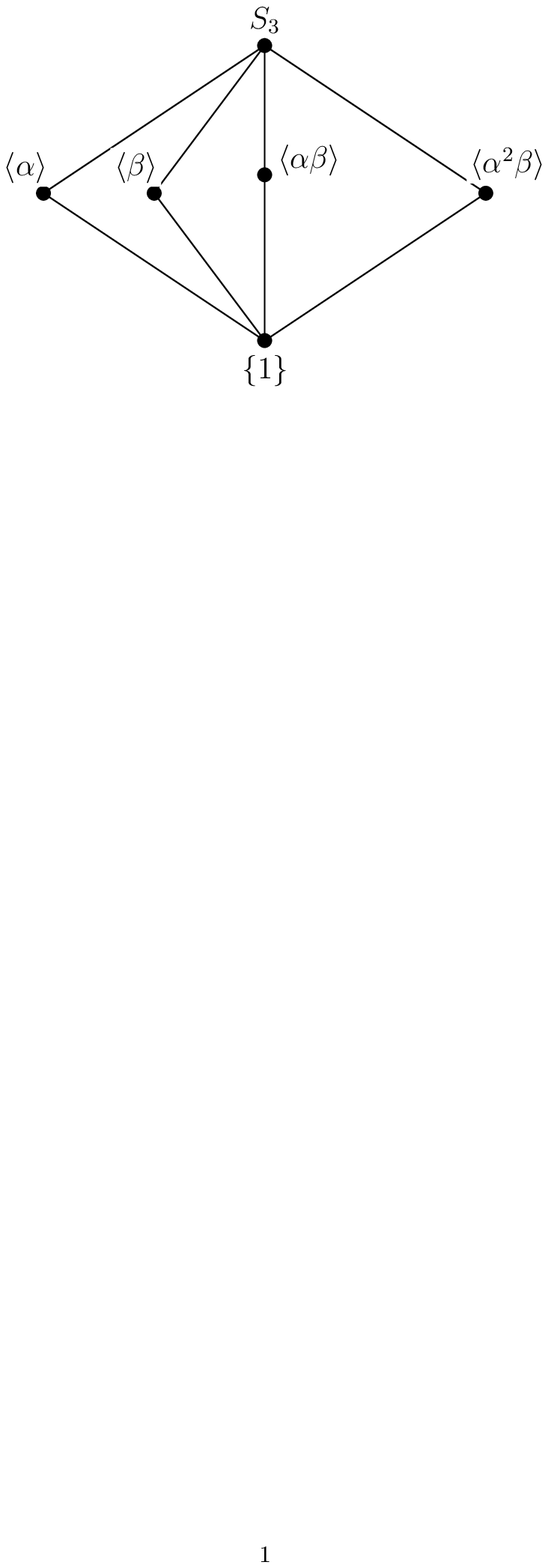} \caption{\small{Hasse diagram of $S_3$}}
\end{center}
\end{figure}

$$\frac{s(S_3)-1}{2}=s^{\star}(\{1\})+s^{\star}(\langle \alpha \rangle)+s^{\star}(\langle \beta \rangle)+s^{\star}(\langle \alpha\beta \rangle)+
s^{\star}(\langle \alpha^2\beta \rangle).$$
Therefore $s(S_3)=19$.
\end{example}

\section{Distinct fuzzy subgroup commutativity for some of dihedral groups }\label{s7}
\begin{remark}\label{r71}
We count distinct fuzzy subgroups of a finite group $G$ on its Hasse diagram for identity cases following:\\
Left to right on subgroups chains increasingly.
\end{remark}
Let $G$ be a finite group. First of all, remark that the distinct fuzzy subgroup commutativity degree $sd(G)$ satisfies
the following relation:
$$
0 < sd(G)\leq 1.
$$
Obviously, the equality $sd(G)=1$ holds if and only if all distinct fuzzy subgroups of $G$ are permutable.\\
Next, for every fuzzy subgroup $\mu$ of $G$, let us denote by $C(H)$ the set consisting of all distinct fuzzy subgroups of $G$
which commute with $\mu$, that is
$$
C(H)=\{\nu\in S(G) | \nu\in P(\mu)\}.
$$
Then
$$
sd(G)=\frac{1}{|S(G)|^2}\sum_{\mu\in S(G)}|C(\mu)|.
$$
It is clear that the fuzzy normal subgroups of $G$ are contained in each set $C(\mu)$(see \cite{MKA}), which implies that
$$
\frac{|N(G)|}{|S(G)|} \leq sd(G)
$$
such that a remarkable modular sublattice of $S(G)$ is the distinct fuzzy normal subgroup lattice $N(G)$, which consists of all distinct fuzzy normal subgroups of $G$.\\
Note that we have $sd(G)=\frac{|N(G)|}{|S(G)|}$ if and only if $N(G)=S(G)$.\\
By \ref{c4}, It is clear that the fuzzy quasinormal subgroups of $G$ are contained in each set $C(\mu)$(see \cite{MKA}), which implies that
$$
\frac{|QN(G)|}{|S(G)|} \leq sd(G)
$$
such that a remarkable modular sublattice of $S(G)$ is the distinct fuzzy quasinormal subgroup lattice $QN(G)$, which consists of all distinct fuzzy normal subgroups of $G$.\\
Note that we have $sd(G)=\frac{|QN(G)|}{|S(G)|}$ if and only if $QN(G)=S(G)$.\\

\begin{example}
Let $S_3=\langle \alpha, \beta | \alpha^3=\beta^2=(\alpha\beta)^2=1 \rangle$.

Let $\mathcal{A}_1$ be the set of all distinct nontrivial fuzzy subgroups of $S_3$ such that are in Hasse subdiagram of $S_3$(chain $\{1\} \subset \langle \alpha \rangle
\subset S_3$, such that its support is exactly $\langle \alpha \rangle$)following:
\begin{figure}[]
\begin{center}
\includegraphics[angle=0, scale=0.5]
{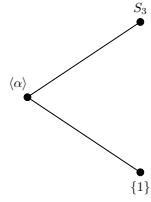} \caption{\small{Hasse subdiagram of $S_3$}}
\end{center}
\end{figure}
Let $\mathcal{A}_2$ be the set of all distinct  nontrivial fuzzy subgroups of $S_3$ on the chain $\{1\} \subset \langle \beta \rangle
\subset S_3$ of $S_3$, such that its support is exactly $\langle \beta \rangle$,\\
$\mathcal{A}_3$ be the set of all distinct nontrivial fuzzy subgroups of $S_3$ on the chain $\{1\} \subset \langle \alpha\beta \rangle
\subset S_3$ of $S_3$, such that its support is exactly $\langle \alpha\beta \rangle$,\\
and $\mathcal{A}_4$ be the set of all distinct nontrivial fuzzy subgroups of $S_3$ on the chain $\{1\} \subset \langle \alpha^2\beta \rangle
\subset S_3$ of $S_3$, such that its support is exactly $\langle \alpha^2\beta \rangle$.\\
Thus $|\mathcal{A}_1|=|\mathcal{A}_2|=|\mathcal{A}_3|=|\mathcal{A}_4|=1$.(For details see \cite{MKA}).
It is clear that for every two subgroups $H$ and $K$ of $S_3$, their product $HK=\{hk | h\in H, k\in K\}$ is a subgroup in $S_3$ except $\langle \beta \rangle\langle \alpha\beta \rangle$,
$\langle \beta \rangle\langle \alpha^2\beta \rangle$, $\langle \alpha\beta \rangle\langle \beta \rangle$, $\langle \alpha\beta \rangle\langle
\alpha^2\beta \rangle$, $\langle \alpha^2\beta \rangle\langle \beta \rangle$ and $\langle \alpha^2\beta \rangle\langle \alpha\beta \rangle$.
If $\mu$ be a fuzzy subgroup of $S_3$ such that its support is exactly equal to $\langle \beta \rangle$
and $\nu$ be a fuzzy subgroup of $S_3$ such that its support is exactly equal to $\langle \alpha\beta \rangle$ or $\langle \alpha^2\beta \rangle$ then $\mu$ and $\nu$ are not permutable. If $\mu$ be a fuzzy subgroup of $S_3$ such that its support is exactly equal to $\langle \alpha\beta \rangle$
and $\nu$ be a fuzzy subgroup of $S_3$ such that its support is exactly equal to $\langle \beta \rangle$ or $\langle \alpha^2\beta \rangle$ then $\mu$ and $\nu$ are not permutable and if $\mu$ be a fuzzy subgroup of $S_3$ such that its support is exactly equal to $\langle \alpha^2\beta \rangle$
and $\nu$ be a fuzzy subgroup of $S_3$ such that its support is exactly equal to $\langle \beta \rangle$ or $\langle \alpha\beta \rangle$ then $\mu$ and $\nu$ are not permutable. Therefore by theorem \ref{t53} and proposition \ref{p51},
$s^{\star}(\langle \beta \rangle)=s^{\star}(\langle \alpha\beta \rangle)=s^{\star}(\langle \alpha^2\beta \rangle)=s^{\star}(\langle \alpha \rangle)=2$. Thus\\
$sd(S_3)=\frac{1}{|S(S_3)|^2}(|\mathcal{A}_1||S(S_3)|+
|\mathcal{A}_2|(|\mathcal{A}_1|+|\mathcal{A}_2|+1)+
|\mathcal{A}_3|(|\mathcal{A}_1|+|\mathcal{A}_3|+1)+
|\mathcal{A}_4|(|\mathcal{A}_1|+|\mathcal{A}_4|+1)+
s^{\star}(\{1\})|S(S_3)|).$
So by example \ref{e65}, $sd(S_3)=\frac{50}{361}$.
\end{example}

\begin{example}
Let $D_8=\langle \alpha,\beta | \alpha^4=\beta^2=1, \beta\alpha\beta=\alpha^{-1} \rangle$, it is clear that for every two subgroups
$H$ and $K$ of $D_8$, their product $HK=\{hk | h\in H, k\in K\}$ is a subgroup in $D_8$ except $\langle \beta \rangle\langle \alpha\beta \rangle$,
$\langle \beta \rangle\langle \alpha^{-1}\beta \rangle$, $\langle \alpha^2\beta \rangle\langle \alpha\beta \rangle$, $\langle \alpha^2\beta \rangle\langle \alpha^{-1}\beta \rangle$, $\langle \alpha\beta \rangle\langle \beta \rangle$, $\langle \alpha\beta \rangle\langle \alpha^2\beta \rangle$, $\langle \alpha^{-1}\beta \rangle\langle \beta \rangle$ and $\langle \alpha^{-1}\beta \rangle\langle \alpha^2\beta \rangle$. If $\mu$ be a fuzzy subgroup of $D_8$ such that its
support is exactly equal to $\langle \beta \rangle$ or $\langle \alpha^2\beta \rangle$ and $\nu$ be a fuzzy subgroup of $D_8$ such that its
support is exactly equal to $\langle \alpha\beta \rangle$ or $\langle \alpha^{-1}\beta \rangle$ then $\mu$ and $\nu$ are not permutable.
Let $\mathcal{A}_1$ be the set of all distinct  nontrivial fuzzy subgroups of $D_8$ on the chain $\{1\} \subset \langle \beta \rangle
\subset \langle \alpha^2, \beta \rangle \subset D_8$ of $D_8$, such that its support is exactly $\langle \alpha^2, \beta \rangle$ or $\langle \beta \rangle$,\\
$\mathcal{A}_2$ be the set of all distinct nontrivial fuzzy subgroups of $D_8$ on the chain $\{1\} \subset \langle \alpha^2\beta \rangle
\subset \langle \alpha^2, \beta \rangle \subset D_8$ of $D_8$, such that its support is exactly $\langle \alpha^2, \beta \rangle$ or $\langle \alpha^2\beta \rangle$,\\
$\mathcal{A}_3$ be the set of all distinct nontrivial fuzzy subgroups of $D_8$ on the chain $\{1\} \subset \langle \alpha^2 \rangle
\subset \langle \alpha^2, \beta \rangle \subset D_8$ of $D_8$, such that its support is exactly $\langle \alpha^2, \beta \rangle$ or $\langle \alpha^2 \rangle$,\\
$\mathcal{A}_4$ be the set of all distinct  nontrivial fuzzy subgroups of $D_8$ on the chain $\{1\} \subset \langle \alpha^2 \rangle
\subset \langle \alpha \rangle \subset D_8$ of $D_8$, such that its support is exactly $\langle \alpha \rangle$ or $\langle \alpha^2 \rangle$,\\
$\mathcal{A}_5$ be the set of all distinct nontrivial fuzzy subgroups of $D_8$ on the chain $\{1\} \subset \langle \alpha^2 \rangle
\subset \langle \alpha^2, \alpha\beta \rangle \subset D_8$ of $D_8$, such that its support is exactly $\langle \alpha^2, \alpha\beta \rangle$ or $\langle \alpha^2 \rangle$,\\
$\mathcal{A}_6$ be the set of all distinct nontrivial fuzzy subgroups of $D_8$ on the chain $\{1\} \subset \langle \alpha\beta \rangle
\subset \langle \alpha^2, \alpha\beta \rangle \subset D_8$ of $D_8$, such that its support is exactly $\langle \alpha^2, \alpha\beta \rangle$ or $\langle \alpha\beta \rangle$,\\
and $\mathcal{A}_7$ be the set of all distinct nontrivial fuzzy subgroups of $D_8$ on the chain $\{1\} \subset \langle \alpha^{-1}\beta \rangle
\subset \langle \alpha^2, \alpha\beta \rangle \subset D_8$ of $D_8$, such that its support is exactly $\langle \alpha^2, \alpha\beta \rangle$ or $\langle \alpha^{-1}\beta \rangle$.\\
Clearly, $|\mathcal{A}_1|=|\mathcal{A}_2|=|\mathcal{A}_3|=|\mathcal{A}_5|=|\mathcal{A}_6|=|\mathcal{A}_7|=2+8-1=9$ and $|\mathcal{A}_4|=2+4-1=5$.
(For details see \cite{MKA}).\\
Thus by example \ref{e62}\\
$sd(D_8)=\frac{1}{63^2}(|\mathcal{A}_1|(s(D_8)-|\mathcal{A}_6|-|\mathcal{A}_7|)+
|\mathcal{A}_2|(s(D_8)-|\mathcal{A}_6|-|\mathcal{A}_7|)+
|\mathcal{A}_3|s(D_8)+|\mathcal{A}_4|s(D_8)+|\mathcal{A}_5|s(D_8)+
|\mathcal{A}_6|(s(D_8)-|\mathcal{A}_1|-|\mathcal{A}_2|)+
|\mathcal{A}_7|(s(D_8)-|\mathcal{A}_1|-|\mathcal{A}_2|)+s^{\star}(\{1\})s(S_8)).
$\\
So $sd(D_8)=\frac{3897}{3969}$.
\end{example}

\begin{proposition}
suppose that $p$ is a prime and $p\geq 3$. If $G$ is the dihedral
group of order $2p$, then $sd(G)=1$.
\end{proposition}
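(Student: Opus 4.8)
The plan is to use the remark recorded just before the statement, namely that $sd(G)=1$ if and only if every pair of distinct fuzzy subgroups of $G$ is permutable. So I would try to show that any two $\mu,\nu\in S(G)$ are permutable. By Theorem \ref{t31} this is equivalent to the purely group-theoretic assertion that $\mu_t\nu_s=\nu_s\mu_t$ for all level subgroups $\mu_t$ ($t\in Im\mu$) and $\nu_s$ ($s\in Im\nu$). Since each family $F_\mu$ is a chain of subgroups of $D_{2p}$, the entire question collapses to deciding which pairs of subgroups of $D_{2p}$ permute.

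First I would classify the subgroups of $G=D_{2p}$ for $p$ an odd prime. As $|G|=2p$ with $p$ prime, Lagrange forces every subgroup to have order $1,2,p$ or $2p$: the unique order-$p$ subgroup is the rotation subgroup $\langle\alpha\rangle\cong\mathbb{Z}_p$, the order-$2$ subgroups are the $p$ reflection subgroups $\langle\alpha^i\beta\rangle$ ($0\le i\le p-1$), and the extreme cases are $\{1\}$ and $G$. This fixes the full subgroup lattice.

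Next I would dispose of the easy permutability cases. The subgroups $\{1\}$ and $G$ permute with every subgroup, and $\langle\alpha\rangle$ has index $2$ and is therefore normal, so $\langle\alpha\rangle H=H\langle\alpha\rangle$ for every $H\le G$. Consequently any pair of subgroups in which at least one member is $\{1\}$, $G$, or $\langle\alpha\rangle$ permutes, and the corresponding fuzzy subgroups are permutable by Theorem \ref{t31}. This already settles every pair except those whose level subgroups include two distinct reflection subgroups.

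The main obstacle is exactly that remaining configuration: two \emph{distinct} reflection subgroups $H=\langle\alpha^i\beta\rangle$ and $K=\langle\alpha^j\beta\rangle$ with $i\neq j$, which do arise as level subgroups of distinct fuzzy subgroups. Using $\beta\alpha^k\beta=\alpha^{-k}$, the decisive computation gives $HK=\{1,\alpha^i\beta,\alpha^j\beta,\alpha^{i-j}\}$ and $KH=\{1,\alpha^i\beta,\alpha^j\beta,\alpha^{j-i}\}$, so $HK=KH$ would force $\alpha^{2(i-j)}=1$, i.e. $i\equiv j\pmod p$ since $p$ is odd. Thus distinct reflection subgroups do \emph{not} permute, and the crux of the proof is whether such level configurations are ever realized in $S(G)^2$ and, if so, how they are weighted in the count. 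This is the step on which the value of $sd(G)$ genuinely depends, and it must be reconciled with the earlier explicit evaluation for $S_3=D_6$ (the case $p=3$); carrying out this reconciliation honestly, rather than the routine lattice bookkeeping, is where the real difficulty lies.
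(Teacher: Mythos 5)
Your reduction is the same one the paper uses (pass to level subgroups via Theorem \ref{t31}), but your follow-through is more honest than the paper's, and your decisive computation in fact refutes the proposition rather than leaving a difficulty to be reconciled. For distinct reflection subgroups $H=\langle\alpha^i\beta\rangle$ and $K=\langle\alpha^j\beta\rangle$ with $i\not\equiv j\pmod p$ one has $|HK|=|H||K|/|H\cap K|=4$, which does not divide $2p$ for odd $p$, so $HK$ is never a subgroup; equivalently, your direct computation shows $HK\neq KH$. These configurations \emph{are} realized in $S(G)^2$: take $\mu(1)=1$, $\mu(\alpha^i\beta)=\tfrac12$, $\mu=0$ elsewhere, and $\nu$ defined analogously with $j$ in place of $i$. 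Both are fuzzy subgroups in $F_1(G)$ lying in distinct classes of $S(G)$, their level subgroups include $H$ and $K$ respectively, and by Theorem \ref{t31} they are not permutable. Hence $sd(D_{2p})<1$ for every prime $p\geq 3$. This is confirmed by the paper's own unlabeled example immediately preceding the proposition, which computes $sd(S_3)=\frac{50}{361}\neq 1$ even though $S_3\cong D_6$ is exactly the case $p=3$.

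The paper's proof consists of the single assertion that ``for every two subgroups $H$ and $K$ of $D_{2p}$, $HK$ is a subgroup,'' from which permutability of all pairs in $S(G)$ is deduced via Theorem \ref{t31}, Proposition \ref{p32} and Theorem \ref{t21}. That assertion is precisely what your counting argument and your explicit computation disprove, so the proposition is false as stated and the ``reconciliation'' you flag at the end cannot be carried out. The statement would become true only after restricting $S(G)$ to classes of fuzzy subgroups all of whose level subgroups are normal (i.e.\ lie in $\{\{1\},\langle\alpha\rangle,G\}$); for the full $S(G)$ the non-permuting reflection pairs contribute negatively and force $sd(D_{2p})<1$.
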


\begin{proof}
It is clear that for every two subgroups
$H$ and $K$ of $D_{2p}$, $HK=\{hk | h\in H, k\in K\}$ is a subgroup in $D_{2p}$. If $\mu$ be a fuzzy subgroup of $D_{2p}$ such that its support is exactly equal to $H$
and $\nu$ be a fuzzy subgroup of $D_{2p}$ such that its support is exactly equal to $K$, then by theorem \ref{t31}, proposition \ref{p32} and theorem \ref{t21}, $\mu$ and $\nu$ are permutable.So that $sd(G)=1$.
\end{proof}

\section*{Acknowledgment}
The authors are grateful to the reviewers for their remarks which improve the previous version of the paper. The authors are deeply grateful to the Islamic Azad University, Ashtian Branch for the financial support for the proposal entitled "FUZZY SUBGROUPS COMMUTATIVITY DEGREE OF DIHEDRAL GROUPS".

{\small

}

\end{document}